\documentclass[oneside,leqno,12pt]{amsart}
%\usepackage[T1]{fontenc}
%\usepackage[utf8]{inputenc}
%\
%\usepackage{authblk}

%\usepackage[notref]{showkeys}
\usepackage{fullpage}
\usepackage{amsfonts}
\usepackage{amsmath}
\usepackage{amsthm}
\usepackage{dsfont}
\usepackage{amsthm}
\usepackage{amssymb} %\usepackage{amssymb} for \mathbb
\usepackage{amsxtra}     % Use various AMS packages
\usepackage{epsfig}
\usepackage{verbatim}
\usepackage{color}
\usepackage{enumitem}
\usepackage{url}

% THEOREMS -------------------------------------------------------

\newtheorem{theorem}{Theorem}
\newtheorem{prop}[theorem]{Proposition}

\newtheorem{lemma}[theorem]{Lemma}

\numberwithin  {equation}{section}

% MATH -----------------------------------------------------------

\newcommand{\R }{\mathbb{R}}
\newcommand{\NN }{\mathbb{N}}

\newcommand{\N}{\mathcal N}

\newcommand{\ds}{\displaystyle}
\newcommand{\de}{\delta}
\newcommand{\al}{\alpha}
\begin{document}
	
	\large
\title {Large versus bounded solutions to sublinear elliptic problems}
%\author{E. Damek, Zeineb Ghardallou }
%{ Ewa Damek, Zeineb Ghardallou
%{University of Wroclaw and University of Tunis}}
%\address
\author{ E. Damek,\\ Institute of Mathematics, Wroclaw University,\\  50-384 Wroclaw,
pl. Grunwaldzki 2/4, Poland\\   
 edamek@math.uni.wroc.pl\\ \\
%\medskip
 Zeineb Ghardallou,\\ Department of Mathematical Analysis and Applications, \\ University Tunis El Manar\\ LR11ES11, 2092 El Manar1, Tunis, Tunisia
\\
zeineb.ghardallou@ipeit.rnu.tn\\}
%\authorrunning{Damek,Ghardallou}
%\titlerunning{Large versus bounded solutions} 
\maketitle
\footnotetext{ \noindent E. Damek is the corresponding author, +48698599950, orcid.org/0000-0003-2847-6150. 

\noindent The first author was supported by the NCN Grant UMO-2014/15/B/ST1/00060.
%\par \hspace*{.1cm} {\it AMS 2010 subject classifications.  Primary 60H25; %secondary 60K05, 60F10, 60J10, 60G70, 60K25, 60K35. }

\noindent Z. Ghardallou orcid.org/0000-0002-9668-0228.

\noindent The authors declare that they have no conflict of interests.

\noindent  \textbf{Keywords and phrases:} {\it Sublinear elliptic problems, Greenian domain, large solutions, bounded solutions, Green potentials, Kato class.}

\noindent
\textbf{Mathematic Subject Classification (2010):} 31C05; 31D05; 35J08; 35J61.
}

%\\Departement of mathematics, University of Wroclaw, Poland}

%\href{Email: zeineb.ghardallou@gmail.com
%}{zeineb.ghardallou@gmail.com}

%\affiliation{Departement of Mathematics, Faculty of Sciences of Tunis, Tunisia\\
%Departement of mathematics, University of Wroclaw, Poland}
%\emailAdd{zeineb.ghardallou@gmail.com}

\begin{abstract} Let $L $ be a second order elliptic operator with smooth coefficients defined on a domain $\Omega \subset \mathbb{R}^d$ (possibly unbounded), $d\geq 3$. %satisfying  {\red $L1\leq 0.$}
	We study nonnegative continuous solutions $u$ to the equation $L u(x) - \varphi (x, u(x))=0$ on $\Omega $,
	where $\varphi $ is in the Kato class with respect to the first variable and it grows sublinearly with respect to the second variable. Under fairly general assumptions we prove that if there is a bounded non zero solution {then} there is no large solution. 
\end{abstract}

\section{Introduction}
%We study sublinear problems related to elliptic operators. More precisely,
{Let $L $ be a second order elliptic operator 
\begin{equation}\label{operator}
L=\sum _{i,j=1}^da_{ij}(x)\partial _{x_i}\partial _{x_j}
+\sum _{i=1}^db_{i}(x)\partial _{x_i}\end{equation}
with smooth coefficients $a_{ij}$, $b_i$ defined on a domain $\Omega \subset \mathbb{R}^d$, $d\geq 3$ \footnote{By a domain we always mean a set that is open and connected.}. No conditions are put on the behavior of $a_{ij}, b_j$ near the boundary of $\partial \Omega $.}
%, satisfying $$L1\leq 0.$$More precisely, for $u\in C^2(\Omega )$
%$$
%Lu(x)=\sum _{i,j=1}^da_{ij}(x)\partial _{x_i}\partial _{x_j}u(x)
%+\sum _{i=1}^db_{i}(x)\partial _{x_i}u(x) +{\red c(x) u(x)}$$
%and $a_{ij}, b_i,c\in C^{\infty }(\Omega )$.\footnote{No conditions are put on %the behavior of $a_{ij}, b_j,c$ near the boundary of $\partial \Omega $.}
We study nonnegative continuous functions $u$ such that
\begin{equation}\label{problem}
L u(x) - \varphi (x, u(x))=0, \ \mbox{on}\  \Omega ,
\end{equation}
in the sense of distributions, where $\varphi :\Omega  \times [0,\infty )\to [0,\infty )$ grows
sublinearly with respect to the second variable. Such $u$ will be later called {\it solutions}.
A solution $u$ to  \eqref{problem} is called {\it large } if $u(x)\to \infty $ when $x\to \partial
\Omega $ or $\|x\| \to \infty $.

{Large solutions i.e. the boundary blow-up problems are of considerable interest due to its several scientific applications in different fields. Such problems arise in the study of Riemannian geometry \cite{Cheng-Ni}, non-Newtonian fluids \cite{Astrita-Marrucci}, the subsonic motion of a gas \cite{Pohzaev} and the electric potential in some bodies \cite{Lazer-Mckenna}}.
	
	 We prove that under fairly general conditions bounded and large
solutions cannot exist at the same time. Classical examples, the reader may have in mind are
\begin{equation}\label{special}
\Delta u - p(x)u^{\gamma }=0\quad \mbox{with}\ 0<\gamma \leq 1 \mbox{ and } p\in \mathcal{ L}_{loc}^\infty, \end{equation}
where $\Delta$ is the Laplace operator on $\mathbb{R} ^d $, but we go far beyond that. 
Not only the operator may be more general but 
the special form of the nonlinearity in \eqref{special} may be replaced by $\varphi (x,t)$ satisfying  

\medskip
\begin{description}
	%\item[($H_0$)] $\Omega $ is a Greenian domain for $L$ (possibly unbounded)\footnote{See Section \ref{useful-lemma} %for the definition, more precisely, \eqref{delta}, \eqref{potential}}
	\item[{($SH_1$)}] {There exists a function $p\in \mathcal{K}_d^{loc}(\Omega)$ locally in the Kato class} such that for every $t\geq 0, x\in \Omega$,  $\varphi(x,t)\leq p(x)(t+1)$.
	\item[$(H_2)$] For every $x\in \Omega$,  $t\mapsto \varphi(x, t )$ is   continuous
	nondecreasing on $[0,+\infty)$. 
	
	  \item[{$(H_3)$}]{$\varphi(x,t)=0$ for every $x\in \Omega$ and $t\leq 0$}.

\end{description}

\medskip
 We recall that a Borel measurable function $\psi$ on $\Omega$ is locally in the Kato class
	in $\Omega$ if $$
	\lim\limits_{\alpha\to 0}\sup_{x\in D}\int_{D\cap(|x-y|\leq
		\alpha)}\frac{|\psi(y)|}{|x-y|^{d-2}}\,dy=0$$ for every open bounded set $D$, $\bar D\subset \Omega $. {$(H_1)$ makes $\varphi$ locally integrable against against the Green function\footnote{ See \eqref{regularityGOmega}, \eqref{delta}, \eqref{potential} for the definition of $G_\Omega$. } for $L$ which plays an important role in our approach. $(H_3)$ is a technical extension of $\varphi$ to $(-\infty,0)$ needed as a tool.}
{For a part of results we replace $ (SH_1)$ by a weaker condition $(H_1)$:

\medskip
\begin{description}
	\item[$(H_1)$]  For every $t\in [0,+\infty )$, $x\mapsto \varphi(x,t)\in \mathcal{ K}_d^{loc}(\Omega).$
\end{description}}
%or even we will need that $\varphi$ be concave with respect to the second %variable 
% $(H_2)$ and $(H_4)$ are motivated by the previous result for $\Delta$ in %\cite{ELMabrouk2004,ElmabroukHansen,L1,LW}. }

\medskip
Applying methods of potential theory we obtain the following result.

\begin{theorem}\label{existence}
	%Let $L$ be a second order elliptic operator with smooth coefficients 
	%defined on a domain $\Omega \subset \mathbb{R}^d$, $d\geq 3$, 
	%satisfying $L1=0$. We 
{Assume that $\Omega $ is Greenian for $L$ \footnote{See Section 
		\ref{largesol1}
		for the definition, more precisely, \eqref{delta}, \eqref{potential}.}.
	Suppose that $\varphi (x,t)=p(x)\psi (t)$ satisfies $(SH_1)$, $(H_2)$, $(H_3)$ and there exists a nonnegative nontrivial bounded solution to \eqref{problem}. Then there is no large solution to \eqref{problem}.}
\end{theorem}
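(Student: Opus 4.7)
Assume for contradiction that a nontrivial bounded solution $u$ and a large solution $v$ coexist on $\Omega$. My plan is to first establish $u\leq v$ everywhere by a comparison-principle argument, then use $(SH_1)$ to represent $u$ via a finite Green potential, and finally exploit the product structure $\varphi(x,t)=p(x)\psi(t)$ together with sublinearity to bound $v$ from above by $\lambda u$ for every $\lambda\geq 1$, giving $v(x_0)=+\infty$ at any $x_0$ where $u(x_0)>0$.

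The ordering $u\leq v$ follows from a standard maximum-principle argument: the open set $U:=\{u>v\}$ is relatively compact in $\Omega$ because $u$ is bounded while $v$ blows up on approach to $\partial\Omega\cup\{\infty\}$. On $U$ the monotonicity in $(H_2)$ gives $L(u-v)=p\,[\psi(u)-\psi(v)]\geq 0$ with $u-v=0$ on $\partial U$, so the elliptic maximum principle forces $u-v\leq 0$ on $\overline U$, a contradiction. Harnack's inequality applied to the nontrivial nonnegative solution $u$ (with Kato-class data) then yields $u>0$ throughout $\Omega$.

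Since $\|u\|_\infty<\infty$, $(SH_1)$ gives $\varphi(\cdot,u)\leq p(\|u\|_\infty+1)\in\mathcal{K}_d^{loc}(\Omega)$, so the Green potential $G_\Omega(\varphi(\cdot,u))$ is finite and continuous, and the Riesz-type decomposition developed in the earlier sections of the paper yields
\[
u=h_u+G_\Omega(\varphi(\cdot,u)),
\]
with $h_u$ a nonnegative bounded $L$-harmonic function. The heart of the argument is then to check that for each $\lambda\geq 1$, $\lambda u$ is a subsolution of $Lw=\varphi(\cdot,w)$, via
\[
L(\lambda u)-\varphi(\cdot,\lambda u)=p\,[\lambda\psi(u)-\psi(\lambda u)]\geq 0,
\]
where the inequality is the scalar sublinearity $\lambda\psi(t)\geq\psi(\lambda t)$ for $\lambda\geq 1$. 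On a relatively compact exhaustion $\Omega_n\uparrow\Omega$, the largeness of $v$ yields $v\geq\lambda u$ on $\partial\Omega_n$ for $n$ large, and the elliptic comparison principle between the solution $v$ and the subsolution $\lambda u$ gives $\lambda u\leq v$ on $\Omega_n$, hence on $\Omega$. Sending $\lambda\to\infty$ at any point where $u>0$ produces $v(x_0)=+\infty$, the desired contradiction.

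The main obstacle is justifying the scaling inequality $\lambda\psi(t)\geq\psi(\lambda t)$ for $\lambda\geq 1$: this is concavity-type sublinearity, evident for $\psi(t)=t^\gamma$ with $\gamma\leq 1$ or for concave $\psi$ with $\psi(0)=0$, but not implied by $(SH_1)$ alone. I expect the paper either to extract such an inequality from an implicit structural property hidden in the authors' notion of ``sublinear,'' or to circumvent the naive scaling by a more delicate potential-theoretic iteration built on the representation above, producing a bounded majorant for $v$ directly out of $u$ and the finite potential $G_\Omega(\varphi(\cdot,u))$.
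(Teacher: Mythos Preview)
Your identification of the gap is correct and it is the heart of the matter: the scaling inequality $\lambda\psi(t)\geq\psi(\lambda t)$ for $\lambda\geq 1$ is \emph{not} a consequence of $(SH_1)$, $(H_2)$, $(H_3)$. A continuous nondecreasing $\psi$ with $\psi(0)=0$ and $\psi(t)\leq C(t+1)$ can easily fail this (e.g.\ $\psi$ piecewise linear, vanishing on $[0,1]$ and equal to $t-1$ thereafter). So your subsolution step $L(\lambda u)-\varphi(\cdot,\lambda u)\geq 0$ is unjustified, and the argument as written does not close.

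The paper does not extract the scaling inequality from $(SH_1)$; it circumvents it. The route is: (i) prove the analogue of your scaling argument \emph{only} for concave nonlinearities, not by multiplying a fixed solution but by using convexity of the Perron-type operators $f\mapsto U_D^{\varphi_1}f$, which gives $U_{D_n}^{\varphi_1}(\lambda)\geq(\lambda/\lambda_1)\,U_{D_n}^{\varphi_1}(\lambda_1)$ and hence $v_\lambda\geq(\lambda/\lambda_1)v_{\lambda_1}$ in the limit (this is Theorem~\ref{non-existence-result-second-version}); (ii) construct, from $(SH_1)$ alone, a concave majorant $\varphi_1\geq\varphi$ with $\varphi_1(x,0)=0$ and $\varphi_1(x,t)\leq Cp(x)(t+1)$ (Theorem~\ref{condominn}; the nontrivial point is forcing $\varphi_1(\cdot,0)=0$); (iii) use a Green-potential characterization of the existence of bounded solutions (Theorem~\ref{sufficient-and-necessary-condition-existence-solution}) to transfer the bounded solution from the $\varphi$-equation to the $\varphi_1$-equation, since both are controlled by $p$. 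Step (i) with the pair $(\varphi,\varphi_1)$ then rules out large solutions for $\varphi$. In short, your scheme is essentially the paper's Theorem~\ref{non-existence-result-second-version} specialized to $\varphi_1=\varphi$, which needs concavity; the additional machinery of Sections~\ref{domin}--\ref{largesol1} is exactly what replaces the missing inequality.
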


Theorem \ref{existence} improves considerably a similar result of El Mabrouk and Hansen
\cite{ElmabroukHansen} for $L$ being the Laplace operator $\Delta$ on $\R ^d$, $\varphi
(x,t)=p(x)\psi (t)$, $p\in \mathcal{L}^{\infty }_{loc} (\mathbb{R}^d)$ and $\psi
(t)=t^{\gamma}$,  $0<\gamma<1$. It is proved in Section \ref{largesol1}. 
%\end{remark}

In fact, we prove a few more general statements than Theorem
\ref{existence} but they are a little bit more technical to formulate and so we refer to Theorem \ref{non-existence-result-second-version} in 
Section \ref{largesol0}. Generally, we do not assume that $\varphi $ has product form and, in
particular, we characterize a class of functions $p(x)$ in $(SH_1)$ for
which there are bounded solutions and do not exist large solutions to \eqref{problem}, see Theorem 
\ref{characterizationc} in Section \ref{largesol1} .

Besides the theorem due to El Mabrouk and Hansen \cite{ElmabroukHansen} there are other results indicating that the equation $\Delta u- p(x)u^{\gamma }=0$ or, more generally, $\Delta u- p(x)\psi (u)=0$ can not have bounded and large solutions at the same time \cite{L}, \cite{L1}, \cite{LW}. We prove such a statement in a considerable generality:
\begin{itemize}
	\item $L$ is an elliptic operator { \eqref{operator}}
	\item $\Omega $ is Greenian for $L$, generally unbounded
	\item the nonlinearity is assumed to have only a sublinear growth, no concavity with respect to the second variable and no product form of $\varphi $ is required.
\end{itemize}
Our main strategy adopted from \cite{ELMabrouk2004} and \cite{ElmabroukHansen} is to relate solutions of \eqref{problem} to $L$-harmonic functions and to make an extensive use of potential theory. We rely on the results of \cite{Ghardallou1} and \cite{Ghardallou2} where such approach was developed.

Existence of large solutions for the equation
\begin{equation*}
\Delta u =p(x)f(u)
\end{equation*}
was studied under more regularity: $p$ H\" older continuous and $f$ Lipschitz (not necessarily monotone), \cite{LPW}
\footnote{More generally, $\Delta u =p(x)f(u)+q(x)g(u)$, $p,q$ H\" older continuous, \cite{LM}.}
or on the whole of $\mathbb{R} ^d$, \cite{YZ}. 
In our approach very little regularity is involved but mononicity of $\varphi $ with respect of $t$ is essential.
If $\varphi $ is not of the product form  but the following condition is  satisfied%s conditions $(H_1)-(H_4)$ then we have

\medskip
{ \begin{description}
	\item[$(H_4)$] For every $x\in \Omega$,  $t\mapsto \varphi(x, t )$ is concave on $[0,+\infty )$.
\end{description}}
then we have

\begin{theorem}\label{existence1}
%	Let $L$ be a  second order elliptic operator with smooth coefficients defined on a domain $\Omega \subset \mathbb{R}^d$, $d\geq 3$, satisfying $L1\leq 0$.
	Suppose that $(H_1)-(H_4)$ hold and that there is a bounded solution to 
	$$
	L u(x) - \varphi (x, u(x))=0.
	$$
	Then there is no large solution.
\end{theorem}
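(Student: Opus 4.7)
The plan is to exploit the concavity hypothesis $(H_4)$ to manufacture, from a single bounded nontrivial solution, a family of arbitrarily large subsolutions, and then to contradict the existence of a large solution by a comparison argument. Let $u$ be the bounded nontrivial solution, fix $x_0\in\Omega$ with $u(x_0)>0$, and suppose for contradiction that $v$ is a large solution of \eqref{problem}. Since $\varphi(x,\cdot)$ is concave on $[0,+\infty)$ by $(H_4)$ and vanishes at $t=0$ by $(H_3)$, for every $\lambda\ge 1$ and $t\ge 0$ I have
\[
\varphi(x,t)=\varphi\!\left(x,\tfrac{1}{\lambda}(\lambda t)+\left(1-\tfrac{1}{\lambda}\right)\cdot 0\right)\ge \tfrac{1}{\lambda}\varphi(x,\lambda t),
\]
so $\varphi(x,\lambda t)\le\lambda\varphi(x,t)$. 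Applying this with $t=u(x)$ and using $Lu=\varphi(\cdot,u)$ yields
\[
L(\lambda u)-\varphi(x,\lambda u)=\lambda\varphi(x,u)-\varphi(x,\lambda u)\ge 0\quad\text{on }\Omega,
\]
so $\lambda u$ is a subsolution of \eqref{problem} for every $\lambda\ge 1$.

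Next, I would compare $\lambda u$ with $v$. Because $u$ is bounded and $v$ is large, for each fixed $\lambda\ge 1$ the difference $v-\lambda u$ tends to $+\infty$ as $x\to\partial\Omega$ or $\|x\|\to\infty$. Using $(H_1)$ (so that $\varphi(\cdot,u)$ and $\varphi(\cdot,v)$ are Green-integrable on relatively compact subdomains) together with the monotonicity $(H_2)$ of $\varphi$ in the second variable, the comparison principle for sub- and supersolutions in the Greenian domain $\Omega$ developed in Section \ref{largesol1} via the potential-theoretic framework of \cite{Ghardallou1,Ghardallou2} gives $\lambda u\le v$ on all of $\Omega$. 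Evaluating at $x_0$ produces $v(x_0)\ge \lambda u(x_0)$ for every $\lambda\ge 1$; letting $\lambda\to\infty$ forces $v(x_0)=+\infty$, contradicting the continuity of $v$.

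The only new ingredient beyond the machinery used to prove Theorem \ref{existence} is the short concavity/sub\-homogeneity step, which replaces the factorized form $\varphi=p\psi$ and plays exactly the same role. Consequently, the main obstacle is the comparison step itself: one must compare an unbounded (large) solution against a bounded subsolution, with ``boundary data at infinity,'' in a possibly unbounded Greenian domain with no regularity of $\partial\Omega$ and only Kato-class integrability of the nonlinearity. This is precisely the potential-theoretic statement that the authors invest in earlier; once it is in place, the concave case follows by the three-line subhomogeneity argument above.
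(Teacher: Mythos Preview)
Your argument is correct and is in fact a streamlined variant of the paper's proof. The paper obtains Theorem \ref{existence1} as the special case $\varphi_1=\varphi$ of Theorem \ref{non-existence-result-second-version}; there the subhomogeneity is expressed through the operators $U_{D_n}^{\varphi}$ via \eqref{con1}, namely $U_{D_n}^{\varphi}(\lambda)\ge(\lambda/\lambda_1)U_{D_n}^{\varphi}(\lambda_1)$, and the large solution is shown to dominate each $U_{D_n}^{\varphi}\lambda$, forcing the nontrivial limit $v_{\lambda_1}$ to vanish. You bypass the $U_{D_n}^{\varphi}$ machinery by noting directly that $\lambda u$ is a subsolution, which is cleaner in the concave case but does not immediately generalize to the two-function comparison $\varphi\le\varphi_1$ that the paper actually needs for Theorem \ref{existence}.

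One point to tighten: the paper never proves or uses a global comparison on a possibly unbounded $\Omega$ with ``boundary data at infinity.'' Lemma \ref{comparaison-semi-elliptic} is stated for a general domain but in the proof of Theorem \ref{non-existence-result-second-version} it is applied only on the bounded regular exhausting sets $D_n$. Your comparison step should be routed the same way: for fixed $\lambda\ge 1$, since $u$ is bounded and $v$ is large, there exists $n_0$ with $v\ge\lambda\|u\|_\infty\ge\lambda u$ on $\partial D_n$ for all $n\ge n_0$; Lemma \ref{comparaison-semi-elliptic} on $D_n$ then gives $v\ge\lambda u$ in $D_n$, hence in all of $\Omega$. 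With this adjustment your proof is complete and slightly shorter than the paper's.
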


 Theorem \ref{existence1} follows directly from Theorem \ref{non-existence-result-second-version}.
% proved in the next section and $L$ above may be a little bit more general. It %is allowed to have a nonpositive zero order term.}
Our strategy for the proof of Theorem \ref{existence} is to construct a function $\varphi _1\geq \varphi $ satisfying $(SH_1)$, $(H_2)-(H_4)$ and to use the statement for $\varphi _1$.\footnote{ The main difficulty is to guarantee  that $\varphi _1(x,0)=0$, see Section \ref{domin}. } To make use of both equations, for $\varphi $ and $\varphi _1$, we need a criterion for existence of bounded solutions to \eqref{problem}, see Theorem \ref{sufficient-and-necessary-condition-existence-solution}. The latter proved in such generality, is itself interesting.

Semilinear problems $\Delta u + g(x,u)=0$ have been extensively studied under variety hypotheses on
$g$ and various questions have been asked. $g$ is not necessarily monotone or negative but there
are often other restrictive assumptions like more regularity of $g$ or the product form. The
problem is usually considered either in bounded domains or in $\Omega =\mathbb{R} ^d$ \cite{BanMar}, \cite{Diaz}, \cite{Dinu}, \cite{ELMabrouk2006}, \cite{Fen}, \cite{Goncalves}, \cite{Guo}, \cite{LS}, \cite{Ahmed}, \cite{Shi}, \cite{Y}, \cite{ZCh}, \cite{Zhang}. Finally, there
not many results for general elliptic operators and if so, the same restrictions apply
\cite{Crandal}, \cite{HMV}, \cite{Sat}, \cite{Stuart}. Clearly stronger regularity of $g$ or $\Omega $ is used to obtain conclusions other than the one we are interested in.

\section{Large solutions to $L u-\varphi (\cdot ,u)=0$ under $(H_1)$, $(H_2)$, $(H_3)$ }\label{largesol0}

%{\red In this section $L$ is slightly more general i.e. it is allowed to have a %nonpositive zero order term.} 
{ In this section we replace $ (SH_1)$ by $(H_1)$ which is weaker. Our aim is to prove that under fairly general assumptions bounded and large solutions to \eqref{problem} cannot occur at the same time.\footnote{In Theorem \ref{non-existence-result-second-version} and all the statements of this section $L$ may be slightly more general: a nonpositive zero order term is allowed.}}
% We assume that $L1\leq 0$ and we replace $ (SH_1)$ by the weaker condition %$(H_1)$. 

\begin{theorem}\label{non-existence-result-second-version}
	
	\vspace{0.2cm}
	 Let $\Omega$ be a domain and $\varphi,\varphi_1:\Omega \times [0,\infty [ \to [0,\infty[ $	satisfies $(H_1), (H_2), (H_3)$.  Assume that $\varphi\leq \varphi_1$ and $\varphi_1$ is concave with
	respect to the second variable. If the equation $Lu=\varphi_1(\cdot,u)$ has a nontrivial nonnegative
	bounded solution in $\Omega$ then $Lu=\varphi (\cdot,u)$ does not have a large solution in $\Omega$.

\end{theorem}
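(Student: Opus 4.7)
I would argue by contradiction. Suppose there coexist a large solution $\bar u$ of $Lw=\varphi(\cdot,w)$ and a nontrivial nonnegative bounded solution $v_0$ of $Lw=\varphi_1(\cdot,w)$. My plan is to show that $\bar u\ge c v_0$ in $\Omega$ for every $c\ge 1$, and then, picking $x_0\in\Omega$ with $v_0(x_0)>0$ (which exists since $v_0\not\equiv 0$ is continuous and nonnegative), to let $c\to\infty$ and obtain $\bar u(x_0)=+\infty$, contradicting the fact that $\bar u$ is finite and continuous in $\Omega$.

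Fix $c\ge 1$ and put $w=\bar u - c v_0$. Since $\bar u$ is large while $v_0$ is bounded, $w(x)\to+\infty$ as $x\to\partial\Omega$ and as $\|x\|\to\infty$, so the open set $U:=\{x\in\Omega:w(x)<0\}$ has compact closure in $\Omega$ and $w=0$ on $\partial U$. On $U$ we have $\bar u<cv_0$. Combining monotonicity of $\varphi_1(x,\cdot)$ (from $(H_2)$), the assumption $\varphi\le\varphi_1$, and the standard consequence of concavity together with $\varphi_1(x,0)=0$ (from $(H_3)$) that $\varphi_1(x,ct)\le c\,\varphi_1(x,t)$ for $c\ge 1$, we estimate on $U$
\[
\varphi(x,\bar u)\ \le\ \varphi_1(x,\bar u)\ \le\ \varphi_1(x,cv_0)\ \le\ c\,\varphi_1(x,v_0).
\]
Since $L\bar u=\varphi(\cdot,\bar u)$ and $L(cv_0)=c\,\varphi_1(\cdot,v_0)$, this yields $Lw\le 0$ on $U$; that is, $w$ is $L$-superharmonic on $U$, continuous on $\overline U$, and vanishes on $\partial U\subset\Omega$. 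The minimum principle for $L$-superharmonic functions on $U$ (valid because $L$ carries no zero-order term, within the potential-theoretic framework for Kato-class data developed in \cite{Ghardallou1,Ghardallou2}) then forces $w\ge 0$ on $U$, contradicting $w<0$ there. Hence $U=\emptyset$ and $\bar u\ge cv_0$ in $\Omega$, as required.

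The only place where concavity of $\varphi_1$ enters is the passage from $\varphi_1(x,cv_0)$ to $c\,\varphi_1(x,v_0)$; this single inequality is what permits a direct comparison between solutions of two different equations. The main technical obstacle I foresee is the rigorous justification of the minimum principle, since $Lw\le 0$ holds only in the distributional sense, with right-hand side only in the local Kato class; making this step precise is where the Perron--Wiener--Brelot-type machinery and the representation of solutions as Green potentials plus $L$-harmonic functions from \cite{Ghardallou1,Ghardallou2} must be invoked.
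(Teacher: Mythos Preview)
Your argument is correct and, in fact, somewhat more direct than the paper's. Both proofs hinge on the same two ingredients: the concavity inequality $\varphi_1(x,ct)\le c\,\varphi_1(x,t)$ for $c\ge 1$ (equivalently, $cv_0$ is a subsolution of $Lw=\varphi_1(\cdot,w)$), and the comparison $\varphi\le\varphi_1$. The difference lies in how the comparison with the large solution is carried out.

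The paper works through the operators $U_{D_n}^{\varphi}$ on a regular exhaustion $(D_n)$ of $\Omega$: from \eqref{con1} one gets $U_{D_n}^{\varphi_1}\lambda\ge(\lambda/\lambda_1)U_{D_n}^{\varphi_1}\lambda_1$, from $\varphi\le\varphi_1$ one gets $U_{D_n}^{\varphi}\lambda\ge U_{D_n}^{\varphi_1}\lambda$, and from largeness of $u$ one gets $u\ge U_{D_n}^{\varphi}\lambda$ on $D_n$ for $n$ large. Passing to the limit yields $u\ge(\lambda/\lambda_1)v_{\lambda_1}$ for a fixed nontrivial $v_{\lambda_1}$, and $\lambda\to\infty$ gives the contradiction. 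You instead bypass the exhaustion and the auxiliary sequences $v_\lambda,u_\lambda$ entirely: you compare $\bar u$ and $cv_0$ directly on the set $U=\{\bar u<cv_0\}$, observe that $U$ is relatively compact because $\bar u$ is large and $v_0$ bounded, and apply the minimum principle to $w=\bar u-cv_0$ there. This is exactly Lemma~\ref{comparaison-semi-elliptic} applied on $U$ with the trivial nonlinearity (or, equivalently, the classical weak minimum principle for $L$, which remains valid even with the nonpositive zero-order term the paper allows in its footnote). Your worry about justifying the distributional minimum principle is well-placed but already covered by the paper's Lemma~\ref{comparaison-semi-elliptic}; the Kato-class hypothesis $(H_1)$ ensures $L\bar u,\,L(cv_0)\in L^1_{loc}$, which is all that lemma requires.

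What the paper's route buys is that the machinery of $U_D^\varphi$ is reused elsewhere (Propositions~\ref{equivalentcon} and~\ref{equivalent-proposition}), so the investment is amortised; what your route buys is a self-contained argument that needs only the comparison lemma and nothing about solvability of \eqref{1UDf} or the convexity property \eqref{concave} of $U_D^{\varphi_1}$.
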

Theorem \ref{non-existence-result-second-version} gives, in particular, the most general conditions for $\Delta $ implying non existence of a bounded and a large solution at the same time. Compare with Theorem 3.1 in \cite{ElmabroukHansen}, where the statement was proved for $\varphi (x,u)=p(x)u^{\gamma }$, $p\in\mathcal{ L} ^{\infty }_{loc}(\Omega )$.

Applying Theorem \ref{non-existence-result-second-version} to $\varphi $ being concave with respect to the second variable we obtain Theorem \ref{existence1}. 
In the next section, we will prove that under $(SH_1)$ such $\varphi _1$ always exists
which makes Theorem \ref{non-existence-result-second-version} largely applicable.

%%%%%%%%%%%%%%%%%%%%%%%%%%%%%%%%%%%%%%%%%%%%%%%%%%%%%%%%%%%%%%%%%%%%%%%%%%%%%%%%%%%%%%%%%%%%%%%%%%%%%%%%%%%%%%%%%%%%%%%%%%%

%%%%%%%%%%%%%%%%%%%%%%%%%%%%%%%%%%%%%%%%%%%%%%%%%%%%%%%%%%%%%%%%%%%%%%%%%%%%%%%%%%%%%%%%%%%%%%%%%%%%%%%%%%%%%%%%%%%%%%%%%%%%%%%
For the proof we need to recall a number of properties satisfied by solutions to \eqref{problem}.
%We begin with a comparison principle and a lemma about converging sequences of %solutions. 
For $L=\Delta $ they were proved in \cite{ELMabrouk2004}, the general case is similar, see \cite{Ghardallou2}. 
\begin{lemma}[{ Lemma 5 in \cite{Ghardallou2}}]\label{comparaison-semi-elliptic}
	\vspace{0.2cm} Suppose that $\varphi $ satisfies ($H_2$). Let $u,v \in \mathcal{C}(\Omega)$ such
	that $Lu,Lv\in{\mathcal{L}}^1_{loc}(\Omega)$. If $$
	L u-\varphi(\cdot,u)\leq L v-\varphi(\cdot,v)$$%, \hbox{in $\Omega$}$$
	in the sense of distributions and
	$$ \liminf\limits_{\underset{y\in\partial \Omega}{x\to y}}{(u-v)(x)}\geq0.
	$$
	Then:$$u-v\geq0\,\,in\,\,\Omega.$$
	
\end{lemma}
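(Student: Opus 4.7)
The plan is a distributional maximum principle argument. Setting $w := v - u \in \mathcal{C}(\Omega)$, the hypothesis rewrites as
\begin{equation*}
L w \;\geq\; \varphi(\cdot, v) - \varphi(\cdot, u) \quad \text{in } \mathcal{D}'(\Omega),
\end{equation*}
and we are told $\limsup_{x \to \partial\Omega} w(x) \leq 0$ (including $\|x\|\to\infty$ when $\Omega$ is unbounded). I want to show $w \leq 0$ on $\Omega$, so I argue by contradiction and suppose $E := \{w > 0\}$ is nonempty.

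The first step is to verify that $w$ is $L$-subharmonic on $E$ in the distributional sense. Since $E$ is open and on $E$ we have $v(x) > u(x)$, the monotonicity hypothesis $(H_2)$ gives $\varphi(x, v(x)) \geq \varphi(x, u(x))$ pointwise on $E$. Combined with the distributional inequality above (localised to $E$ by testing only against $\mathcal{C}_c^\infty(E)$ functions), this yields
\begin{equation*}
L w \;\geq\; 0 \quad \text{in } \mathcal{D}'(E).
\end{equation*}
Since $w$ is continuous and $Lw \in \mathcal{L}^1_{loc}(E)$, standard regularization (convolution with a mollifier, or invoking the classical equivalence of distributional and mean-value formulations of $L$-subharmonicity for continuous functions, as set up in \cite{Ghardallou1,Ghardallou2}) identifies $w|_E$ with a genuine $L$-subharmonic function on $E$.

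The second step is the boundary behavior of $w$ on $\partial E$. By continuity of $w$, every point of $\partial E \cap \Omega$ satisfies $w = 0$, hence $\limsup w \leq 0$ there. At points of $\partial E \cap \partial\Omega$ (and at infinity, if relevant), the hypothesis $\liminf(u-v) \geq 0$ yields $\limsup w \leq 0$. Therefore $\limsup_{x\to y} w(x) \leq 0$ for every $y \in \partial E$ (relative to the one-point compactification when $\Omega$ is unbounded).

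The third step is to apply the generalized maximum principle for $L$-subharmonic functions on the (possibly unbounded, possibly disconnected) open set $E$: combined with the boundary estimate from Step 2 this forces $w \leq 0$ on $E$, contradicting the definition of $E$. Hence $E = \emptyset$ and $u \geq v$.

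The main obstacle is the maximum principle invoked in Step 3, particularly on unbounded components of $E$ where naive $L$-subharmonicity does not by itself suffice. The clean way is to reduce to the bounded case by exhausting $E$ with bounded subdomains $E_n := E \cap B_n \cap \{x : \mathrm{dist}(x,\partial\Omega) > 1/n\}$ and applying the standard maximum principle on each $E_n$; the boundary values on $\partial E_n$ split into the part on $\partial E$ (handled by Step 2) and the part inside $E$ (where one uses the limsup hypothesis as $n \to \infty$, since points escape every compact to either $\partial\Omega$ or infinity). The Kato-class regularity of the coefficients and of $\varphi(\cdot, u)$ is precisely what guarantees that the classical maximum principle applies to continuous distributional $L$-subsolutions — this is the content of the comparison machinery developed in \cite{Ghardallou2}.
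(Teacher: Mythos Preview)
The paper does not actually prove this lemma; it is quoted verbatim as ``Lemma~5 in \cite{Ghardallou2}'' and used as a black box, with the surrounding remark that for $L=\Delta$ the result is in \cite{ELMabrouk2004} and the general case is similar. So there is no in-paper proof to compare against.

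That said, your argument is the standard one and is correct. The reduction to $Lw\geq 0$ on $E=\{w>0\}$ via $(H_2)$ is exactly right, and your handling of the unbounded case is the appropriate fix: one point worth making explicit is why $\sup\{w(x):x\in E,\ \|x\|=n\ \text{or}\ \mathrm{dist}(x,\partial\Omega)=1/n\}\to 0$. This follows because any sequence $(x_k)$ realizing a positive lower bound along these sets leaves every compact subset of $\Omega$, hence has a subsequence converging to some $y\in\partial\Omega$ (or to $\infty$), and the pointwise hypothesis $\liminf_{x\to y}(u-v)\geq 0$ then forces $\limsup w(x_k)\leq 0$, a contradiction. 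With that sub-subsequence argument spelled out, the exhaustion step is airtight and the classical maximum principle on each bounded $E_n$ (valid for continuous distributional $L$-subsolutions since $L$ has smooth coefficients and no positive zero-order term) finishes the proof.
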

	
\medskip
For a bounded regular domain $D\subset \mathbb{R}^d$ and a non negative function $f$ %\in \mathcal{B}^+(\mathbb{R}^d)$
continuous on $\partial D$, we define $U_D^{\varphi}f$ to be the function such that
$U_D^{\varphi}f=f$ on $\mathbb{R}^d\backslash D$ and $U_D^{\varphi}f| _{D}$ is the unique solution
of problem \begin{equation}
\left\{
\begin{array}{ll}
L u-\varphi(\cdot,u)=0, & \hbox{in $D$; in the sense of distributions;} \\
u\geq 0,               & \hbox{in $D$;} \\
u=f, & \hbox{on $\partial D$.}
\end{array}
\right. \label{1UDf}
\end{equation}
%%%%%%%%%%%%%%%%%%%%%%%%%%%%%%%%%%%%%%%%%%%%%%%%%%%%%%%%%%%%%%%%%%%%%%%%%%%%%%%%%%%%%%%%%%%%%%%%%%%%%%%%%%%%%%%%%%%%%%%%%%%%%
Existence of $U_D^{\varphi} f$ was proved in \cite{Ghardallou2} Theorem 4. Moreover,
\begin{equation}\label{identity}
H_Df=U^{\varphi }_Df+G_D\varphi (\cdot , U^{\varphi }_Df), \quad \mbox{in}\ D, \end{equation}
where $H_Df$ is a $L$-harmonic function in $D$ with boundary values $f$ and for a function $u$ and
\begin{equation}\label{greenpotential}
G_D(\varphi (\cdot , u))(x)=\int _{\Omega }G_{\Omega }(x,y)\varphi (y , u(y))\ dy.
\end{equation}
In particular $ U^{\varphi }_Df$ is not identically $0$ in $D$ {if it is so for $f$ on} 
$\partial D$.

Now we focus on properties of $U^{\varphi}_Df$. We say that $u$ is a supersolution to \eqref{problem} if $Lu-\varphi (\cdot ,u)\leq 0$ and a subsolution if $Lu-\varphi (\cdot ,u)\geq 0$. The following lemma is a direct consequence of Lemma \ref{comparaison-semi-elliptic} and existence of solutions to  \eqref{1UDf}. { For $L=\Delta $ is was proved in \cite{ELMabrouk2004}.}

 \begin{lemma}\label{properties-of-UD}
%Let $\La $ be a second order elliptic operator with smooth coefficients.
Suppose that $\varphi$ satisfies $(H_1), (H_2),(H_3)$ and let $D$ be a bounded regular domain such that $\bar D\subset \Omega $.
  $U_D^{\varphi}$ is monotone nondecreasing in the following sense
        \begin{equation}\label{monotone}
      U_D^{\varphi}f\leq U_D^{\varphi}g, \hbox{     if     } f\leq g \hbox{ in }\Omega.\end{equation}
%\end{document}
 Let  $u$ be a continuous supersolution and $v$ a continuous subsolution of
      \eqref{problem} in $\Omega $. Suppose further that $D$ and $D'$ are regular bounded domains
      such that $D'\subset D \subset \Omega $. Then we have:

    \begin{equation}\label{super}
   U_D^{\varphi}u\leq u \ \hbox{     and     }\
            U_D^{\varphi}v\geq v.
\end{equation}
         \begin{equation}\label{super1}
  U_{D'}^{\varphi}u \geq U_D^{\varphi}u\ \hbox{and}\
            U_{D '}^{\varphi}v\leq U_D^{\varphi}v.\end{equation}
      If in addition, ($H_4$) holds \footnote{Notice that concavity with $(H_1)$ and $(H_2)$
           implies $(SH_1)$.} then $U_D^{\varphi}$ is convex function on
           $\mathcal{C}^+(\partial D)$ i.e. for every $\lambda\in[0,1]$ 
           
            \begin{equation}\label{concave}
      U_D^{\varphi}(\lambda f+(1-\lambda)g)\leq \lambda U_D^{\varphi}f+(1-\lambda)U_D^{\varphi}g. \end{equation}
%\end{lemma}\end{document}
In particular, for every $\alpha \geq 1 $
\begin{equation}\label{con1}
U_D^{\varphi}(\alpha f)\geq \alpha U_D^{\varphi}f.
\end{equation}

\end{lemma}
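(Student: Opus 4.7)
The plan is to derive all five inequalities from a single engine, the comparison Lemma \ref{comparaison-semi-elliptic}, fed with carefully chosen pairs of functions built from $U_D^{\varphi}f$, $U_D^{\varphi}g$, $u$, or $v$. The required regularity of $U_D^{\varphi}f$ (continuity up to $\bar D$ and $L U_D^{\varphi}f\in\mathcal{L}^1_{loc}(D)$) is built into its construction through \eqref{identity} and the existence result in \cite{Ghardallou2}, so at each step I would only have to check the sign of $L w_1-\varphi(\cdot,w_1)-L w_2+\varphi(\cdot,w_2)$ and the boundary behavior.

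For \eqref{monotone} I would compare $U_D^{\varphi}f$ and $U_D^{\varphi}g$ directly: both satisfy the equation in $D$ and the boundary difference is $g-f\geq 0$, so Lemma \ref{comparaison-semi-elliptic} applies. For \eqref{super}, if $u$ is a supersolution then the pair $(u,U_D^{\varphi}u)$ satisfies $L u-\varphi(\cdot,u)\leq 0=L U_D^{\varphi}u-\varphi(\cdot,U_D^{\varphi}u)$ with matching boundary values on $\partial D$, giving $u\geq U_D^{\varphi}u$; the subsolution case is symmetric. The nested-domain inequality \eqref{super1} then drops out by restricting $U_D^{\varphi}u$ to $D'$ (still a solution there), using \eqref{super} on $\partial D'$ to bound its boundary values against $U_{D'}^{\varphi}u=u$, and applying the comparison principle once more inside $D'$.

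Under $(H_4)$ I would prove \eqref{concave} by setting $w_\lambda=\lambda U_D^{\varphi}f+(1-\lambda)U_D^{\varphi}g$ and exploiting linearity of $L$ together with concavity of $\varphi(x,\cdot)$:
\[
L w_\lambda-\varphi(\cdot,w_\lambda)=\lambda\varphi(\cdot,U_D^{\varphi}f)+(1-\lambda)\varphi(\cdot,U_D^{\varphi}g)-\varphi(\cdot,w_\lambda)\leq 0.
\]
Thus $w_\lambda$ is a supersolution in $D$ with boundary values $\lambda f+(1-\lambda)g$, and a final application of Lemma \ref{comparaison-semi-elliptic} against $U_D^{\varphi}(\lambda f+(1-\lambda)g)$ delivers \eqref{concave}. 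The scaling inequality \eqref{con1} is then immediate: $(H_3)$ forces $\varphi(x,0)=0$, hence by uniqueness $U_D^{\varphi}0=0$, and writing $f=\tfrac{1}{\alpha}(\alpha f)+(1-\tfrac{1}{\alpha})\cdot 0$ in \eqref{concave} rearranges to $\alpha U_D^{\varphi}f\leq U_D^{\varphi}(\alpha f)$.

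The step I expect to require the most thought is \eqref{concave}: the whole argument hinges on the observation that a convex combination of two solutions, while no longer a solution, becomes a supersolution precisely when $\varphi$ is concave in the second variable, because the sign of the error term then flips in the right direction. Once that is in hand everything else is essentially routine. The only administrative point to monitor is that the functions being compared — in particular $w_\lambda$ and the restriction $U_D^{\varphi}u|_{D'}$ — meet the regularity hypotheses of Lemma \ref{comparaison-semi-elliptic}, which follows from $(H_1)$ and the construction via \eqref{identity}.
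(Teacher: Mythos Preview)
Your proposal is correct and follows exactly the approach the paper indicates: the paper merely states that the lemma ``is a direct consequence of Lemma \ref{comparaison-semi-elliptic} and existence of solutions to \eqref{1UDf}'' without spelling out the details, and your argument fills in precisely those details by feeding the comparison principle the appropriate pairs of functions in each case. The concavity step and the derivation of \eqref{con1} from \eqref{concave} via $U_D^{\varphi}0=0$ are handled correctly.
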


\medskip
Now, let $(D_n)$ be a sequence of bounded regular domains such that for every $ n\in \mathbb{N},$
$\overline{D_n}\subset D_{n+1} \subset \Omega$ and $\displaystyle\bigcup _{n=1}^{\infty }D_n=\Omega
$. Such a sequence will be called {\it a regular exhaustion} of $\Omega $ and it is used to generate solutions to \eqref{problem}. 
%The following proposition summarizes the basic properties of 
%For the sequence $U_{D_n}^{\varphi}g$ with $L$ superharmonic we have.

\begin{prop}[Proposition 10 in \cite{Ghardallou2}]\label{convergenceU_Dn}
	Let $g\in C^+(\Omega )$ be a $L$-superharmonic function. Then the sequence $(U_{D_n}^{\varphi}g)$ is decreasing to a solution
	$u\in\mathcal{C}^+(\Omega)$ of \eqref{problem} satisfying $u_g\leq g$.\footnote{Note here that $u_g$
		may be zero and usually and extra argument is needed to assure it is not.}

\end{prop}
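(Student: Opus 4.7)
The strategy is to establish the monotonicity of $(U_{D_n}^\varphi g)$ from the comparison properties of $U_D^\varphi$ proved in Lemma \ref{properties-of-UD}, pass to the pointwise limit, and then identify this limit as a continuous solution of \eqref{problem} through a local Green/harmonic decomposition on each small regular subdomain.

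First I would verify monotonicity. Since $g$ is $L$-superharmonic and nonnegative, $Lg\leq 0\leq \varphi(\cdot,g)$, so $g$ is a supersolution of \eqref{problem}. Lemma \ref{properties-of-UD}, via \eqref{super}, yields $u_n:=U_{D_n}^{\varphi}g\leq g$ on $\Omega$. Relation \eqref{super1} applied to $D_n\subset D_{n+1}$ gives $u_{n+1}\leq u_n$ on $D_n$; on $D_{n+1}\setminus D_n$ one has $u_{n+1}\leq g = u_n$; and on $\Omega\setminus D_{n+1}$ both functions coincide with $g$. Hence $(u_n)$ is nonincreasing on $\Omega$ and bounded below by $0$, so it converges pointwise to some function $u_g$ with $0\leq u_g\leq g$.

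Next I would identify the limit as a solution by working locally. Fix a bounded regular subdomain $V$ with $\bar V\subset\Omega$ and choose $n_0$ so that $\bar V\subset D_n$ for $n\geq n_0$. On $V$ each $u_n$ solves $L u_n=\varphi(\cdot,u_n)$ and is continuous on $\bar V$, so applying \eqref{identity} on $V$ rather than on $D_n$ gives
$$
u_n = H_V\!\left(u_n|_{\partial V}\right) + G_V\!\left(\varphi(\cdot,u_n)\right) \quad \text{on } V.
$$
For the harmonic term, $H_V(u_n|_{\partial V})$ is a decreasing sequence of nonnegative $L$-harmonic functions on $V$, uniformly bounded by $\|g\|_{L^\infty(\bar V)}$; the Harnack convergence principle then forces the limit $h$ to be $L$-harmonic on $V$. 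For the potential term, $(H_2)$ gives $\varphi(\cdot,u_n)\downarrow \varphi(\cdot,u_g)$ pointwise, while $(H_1)$ supplies the Kato-class majorant $y\mapsto \varphi(y,\|g\|_{L^\infty(\bar V)})$. The standard fact that the Green potential of a Kato-class function is continuous, and that dominated convergence of the density propagates to locally uniform convergence of the Green potential, yields $G_V(\varphi(\cdot,u_n))\to G_V(\varphi(\cdot,u_g))$ continuously on $V$. Passing to the limit,
$$
u_g = h + G_V(\varphi(\cdot,u_g)) \quad \text{on } V,
$$
so $u_g$ is continuous on $V$ and, applying $L$ in the sense of distributions, satisfies $L u_g = \varphi(\cdot,u_g)$ on $V$. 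Since $V$ was arbitrary, $u_g\in\mathcal{C}^+(\Omega)$ is a solution of \eqref{problem} with $u_g\leq g$.

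The main obstacle I expect is securing continuity of the limit: pointwise convergence of $\varphi(\cdot,u_n)$ is not enough by itself, because the Green term must converge locally uniformly for $u_g$ to inherit continuity and for the identity above to close up as a distributional equation. The Kato-class majorant from $(H_1)$, combined with the monotonicity of the sequence, is exactly the ingredient that upgrades dominated convergence to locally uniform convergence of $G_V(\varphi(\cdot,u_n))$, transferring the continuity of each $u_n$ to $u_g$.
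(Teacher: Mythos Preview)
The paper does not supply its own proof of this proposition; it is quoted verbatim as Proposition~10 of \cite{Ghardallou2} and used as a black box. So there is nothing in the present paper to compare your argument against.

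That said, your sketch is the standard route and is essentially correct, with one slip: the Green/harmonic decomposition coming from \eqref{identity} reads $H_Vf = U_V^\varphi f + G_V\varphi(\cdot,U_V^\varphi f)$, hence on $V$ one has
\[
u_n \;=\; H_V\!\left(u_n|_{\partial V}\right) \;-\; G_V\!\left(\varphi(\cdot,u_n)\right),
\]
not ``$+$''. This does not damage the logic: both the harmonic part and the potential part are monotone decreasing in $n$, each converges (the former by interior elliptic estimates for a bounded monotone family of $L$-harmonic functions, the latter by the Kato-class dominated convergence you invoke), and the limit identity $u_g = h - G_V\varphi(\cdot,u_g)$ still yields $u_g\in\mathcal{C}^+(\Omega)$ with $Lu_g=\varphi(\cdot,u_g)$. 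Just correct the sign and, if you want to be precise, replace the phrase ``Harnack convergence principle'' (which is usually stated for increasing sequences) by a reference to interior Schauder/elliptic estimates or to the harmonic-measure representation of $H_V$.
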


Now we are ready to prove the main result of this section.

\begin{proof}(Proof of Theorem \ref{non-existence-result-second-version})
	Suppose that $Lu-\varphi_1(\cdot,u)=0$ has a nontrivial nonnegative bounded solution $\tilde u $ in
	$\Omega$. Let $(D_n)$ be an increasing sequence of bounded regular domain exhausting $\Omega$. Then, by Proposition \ref{convergenceU_Dn} for every $ \lambda\geq \lambda _1 =\|
	\tilde u\| _{L^{\infty }}>0$, $v_\lambda=\lim\limits_{n\to + \infty} U_{D_n}^{\varphi_1} \lambda$ is
	a nontrivial nonnegative bounded solution of $Lu-\varphi_1(\cdot,u)=0$ in $\Omega$ too.
	
	Let $\lambda\geq \lambda _1$. Then by Lemma \ref{properties-of-UD}    , $U_{D_n}^{\varphi_1} \lambda\geq \frac{\lambda}{\lambda _1} U_{D_n}^{\varphi_{1}}\lambda _1$. Therefore, letting $n\to \infty $ we obtain
	%which implies when $n$ tends to $+\infty$ that
	$$v_\lambda\geq \frac{\lambda}{\lambda _1} v_{\lambda _1}, $$
	where $v_{\lambda _1}= \lim\limits_{n\to \infty} U_{D_n}^{\varphi_1} \lambda _1$.
	
	 Furthermore, $\varphi\leq \varphi_1$ implies, by Lemma \ref{comparaison-semi-elliptic}, that
	$U_{D_n}^{\varphi}\lambda\geq U_{D_n}^{\varphi_1}\lambda$, because $U^{\varphi }_{D_n}\lambda $ is a supersolution to $Lu-\varphi _1(\cdot , u)$. Following this,
	$$u_{\lambda}=\lim _{n\to + \infty} U_{D_n}^{\varphi}\lambda     \geq v_{\lambda}.$$
	Suppose now that there is a large solution to \eqref{problem} denoted by $u$. \newline 
	Then $\liminf\limits_{x\to\partial \Omega}u=+\infty.$ 
	Hence for sufficiently large $n$, $u\geq U_{D_n}^{\varphi}\lambda$ on $\partial D_n$ and so, by Lemma \ref{comparaison-semi-elliptic}
	$$u\geq u_\lambda \geq v _{\lambda }.$$
	Consequently $u\geq \frac{\lambda}{\lambda _1} v_{\lambda _1}$ and so $ \frac{u}{\lambda}\geq \frac{1}{\lambda _1}v_{\lambda _1}$ for every $\lambda\geq \lambda _1$.
	When $\lambda $ tends to infinity, we get that $v_{\lambda _1}=0$ which gives a contradiction.
\end{proof}

\section{Domination by a concave function}\label{domin}
The aim of this section is to show that $(SH_1)$, $(H_2)$, $(H_3)$ imply existence of a function $\varphi _1$ concave with respect to the second variable and such that
$$\varphi (x,t)\leq  \varphi _1 (x,t), \quad \varphi _1 (x,0)=0.$$ 
Clearly, a nonnegative function $\psi $ concave on $[0,\infty )$, continuous at zero, $\psi (0)=0$ is dominated by an affine function. Indeed, given $\beta >0$, we have
$$
\psi (t)\leq \frac{t}{\beta }\psi (\beta ), \quad t\geq \beta $$
and so  
$$
\psi (t)\leq \frac{t}{\beta }\psi (\beta )+\sup _{0\leq s\leq \beta}\psi (s). $$
The idea behind $(SH_1)$ is to formulate as weak condition as possible to go beyond concavity in Theorem \ref{existence}. It turns out that $(SH_1)$ together with Theorem \ref{condominn} do the job. Clearly, the most delicate part is to guarantee that $\varphi _1(x,0)=0$.

\begin{theorem}\label{condominn}
	Suppose that $\varphi (x,t)$ satisfies $(SH_1)$, $(H_2)$, $(H_3)$. There is  $\varphi _1 (x,t)$ satisfying $(SH_1)$, $(H_2)$, $(H_3)$, $(H_4)$ such that
	$$\varphi (x,t)\leq  \varphi _1 (x,t).$$
	Moreover, { there exists a constant $C>0$ } such that
	$$\varphi _1(x,t)\leq C p(x)(t+1).$$
\end{theorem}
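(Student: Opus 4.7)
The plan is to take $\varphi_1(x,\cdot)$ to be the \emph{least concave majorant} of $\varphi(x,\cdot)$ on $[0,\infty)$, written explicitly as
\[
\varphi_1(x,t):=\inf\bigl\{a+bt:\,a,b\ge 0,\ a+bs\ge\varphi(x,s)\text{ for all }s\ge 0\bigr\},\quad t\ge 0,
\]
and extended by $0$ for $t<0$. The set of admissible pairs $(a,b)$ is nonempty because $(SH_1)$ furnishes the affine majorant $s\mapsto p(x)(s+1)$, so $a=b=p(x)$ is always admissible; plugging this pair into the infimum gives
\[
\varphi_1(x,t)\le p(x)(t+1)\quad\text{for every }t\ge 0,
\]
which is exactly the moreover statement (with $C=1$) and in particular $(SH_1)$ for $\varphi_1$.

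Being an infimum of affine functions, $\varphi_1(x,\cdot)$ is concave on $[0,\infty)$, giving $(H_4)$; being an infimum of nondecreasing nonnegative functions it is itself nondecreasing, and by construction $\varphi_1\ge\varphi$. Measurability of $x\mapsto\varphi_1(x,t)$ reduces to a countable infimum by restricting to rational $(a,b)$ and exploiting continuity of $\varphi(x,\cdot)$ in $s$, so $\varphi_1(\cdot,t)\in\mathcal K_d^{loc}(\Omega)$ follows from domination by $p(t+1)$. The two remaining requirements, continuity of $\varphi_1(x,\cdot)$ on $[0,\infty)$ and $\varphi_1(x,t)=0$ for $t\le 0$, both reduce to proving $\varphi_1(x,0)=0$: concavity yields continuity on $(0,\infty)$ automatically, while the existence of affine majorants with arbitrarily small intercept (see below) forces $\limsup_{t\to 0^+}\varphi_1(x,t)=0$, hence continuity at $0$.

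The hard part is therefore $\varphi_1(x,0)=0$, and this is where the two facets of $(SH_1)$ must collaborate. Fix $x\in\Omega$ and $\varepsilon>0$. From $(H_2)$ and $(H_3)$ one has $\varphi(x,0)=0$ with $\varphi(x,\cdot)$ continuous at $0$, so there is $\delta>0$ with $\varphi(x,s)<\varepsilon$ on $[0,\delta]$. On $[\delta,\infty)$ the sublinear bound gives
\[
\varphi(x,s)\le p(x)(s+1)\le p(x)(1+1/\delta)\,s=:K\,s,
\]
hence the affine function $s\mapsto \varepsilon+Ks$ dominates $\varphi(x,\cdot)$ on all of $[0,\infty)$, yielding $\varphi_1(x,0)\le\varepsilon$. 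Letting $\varepsilon\to 0$ closes the argument. The main obstacle, and the reason the stronger condition $(SH_1)$ rather than the weaker $(H_1)$ is needed, is precisely that $\varphi(x,0)=0$ alone does not force the concave envelope to vanish at $0$; the affine upper bound is what lets one keep the slope of the majorant finite on each scale $\delta>0$ while shrinking the intercept to zero.
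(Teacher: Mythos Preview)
Your argument is correct and, in fact, considerably simpler than the paper's. The paper does not take the full least concave majorant directly; instead it manufactures a specific one-parameter family of affine majorants by mollification: it shows that for each $0<\delta<1$ the function $\psi_\delta(x,t)=\frac{2c_1}{\delta}p(x)t+2(\varphi_x*\eta_\delta)(0)$ dominates $\varphi(x,t)$ on $[0,2]$, sets $\psi(x,t)=\inf_{0<\delta<1}\psi_\delta(x,t)$, and then pieces together $\varphi_1(x,t)=2p(x)t+\psi(x,\min(t,1))$. The crucial point $\psi(x,0)=0$ is obtained from $(\varphi_x*\eta_\delta)(0)\to\varphi(x,0)=0$ as $\delta\to 0$. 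Your approach bypasses the mollification entirely: you take the infimum over \emph{all} admissible affine majorants, and your $\varepsilon$--$\delta$ argument (small intercept $\varepsilon$ on $[0,\delta]$ from continuity at $0$, slope $p(x)(1+1/\delta)$ on $[\delta,\infty)$ from $(SH_1)$) produces exactly the same collapse of the intercept to zero, only more transparently. A side benefit is that you obtain the sharp constant $C=1$ in the final bound, whereas the paper's construction gives a larger constant depending on the mollifier. The measurability remark is a point the paper does not address explicitly, and your reduction to a countable infimum via rational $(a,b)$ and continuity of $\varphi(x,\cdot)$ is the right way to handle it.
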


\begin{proof}
	For $t\geq 1$ $$\varphi (x,t)\leq 2p(x)t.$$ We need to dominate $\varphi $ for $t\leq 1$. 
	Let $\eta \in C^{\infty }(\mathbb{R} )$, $\eta \geq 0$, $supp \ \eta \subset (-1,1)$, $\eta (-t)=\eta (t)$ and $\int _{\mathbb{R} } \eta (s)\ ds=1$. 
	 Given $0<\delta \leq 1$, let $\eta _{\delta }(t)=\frac{1}{\delta } \eta (\frac{1}{\delta }t)$, $t\in \R$.
	Let $x\in \Omega$. We note  $\varphi _x(t)=\varphi (x,t)$, $t\in \R$. Then
	%For every $x$ and $\eps >0$ we can choose $\d >0$ such that $\varphi _x (t)<\eps $ if $t<\d $.
	\begin{equation}
	\varphi _x * \eta _{\delta }(0)=\int _{-\de}^{\de }\varphi _x(-t) \eta _{\delta }(t)\ dt= \int_{-1}^1 \varphi(x, \de s) \eta(s) \;ds
	\end{equation}
	Hence
	\begin{equation}
	0\leq \inf _{\de } \varphi _x * \eta _{\delta }(0)=\lim _{\de \to 0} \varphi _x * \eta _{\delta }(0)=\varphi _x(0)=0.
	\end{equation}
	
	Secondly, $(\varphi _x * \eta _{\delta })'=\varphi _x * (\eta _{\delta })'$ and
	\begin{equation}
	(\eta _{\delta })'(t)=\frac{1}{\de ^2}\eta '\big (\frac{1}{\delta }t\big ).
	\end{equation}
	
	Moreover,
	\begin{align*}
	\int _{\mathbb{R}}|(\eta _{\delta })'(t)|\ dt &\leq \int _{\mathbb{R} }\frac{1}{\de ^2}|\eta '\big (\frac{1}{\delta }t\big )\ |dt \\
	&=\int _{\mathbb{R} }\frac{1}{\de }|\eta '(s)|\ ds .
	\end{align*}
	Therefore, if $0 \leq t\leq 2$ then 
	\begin{align*}
	|(\varphi _x * \eta _{\delta })'(t)|&\leq \int _{\mathbb{R}}\varphi _x(t-s)|(\eta _{\delta })'(s)|\ ds\\ &{\leq p(x)\frac{4}{\de } \int _{\mathbb{R} }|\eta '(s)|\ ds} .
	\end{align*}
		Consequently, there exists a constant $c_1$ such that for $0\leq t \leq 2$ we have
	\begin{equation}
	\varphi _x * \eta _{\delta }(t)\leq \frac{c_1}{\de }p(x)t+\varphi _x * \eta _{\delta }(0).
	\end{equation}
	Moreover,
	\begin{align*}
	\varphi _x * \eta _{\delta }(t)=&\int _{\mathbb{R}}\varphi _x(t-s)\eta _{\delta }(s)\ ds \\
	&\geq \int _{-\de }^0\varphi _x(t-s)\eta _{\delta }(s)\ ds \\
	&\geq \varphi _x (t)\int _{-\de }^0\eta _{\delta }(s)\ ds =\frac{1}{2} \varphi _x (t)
	\end{align*}
Hence $$\varphi _x(t)\leq  2\varphi_x *\eta _{\de }(t)$$
and so	
for $t\in [0,2]$ $$\varphi _x(t)\leq  \frac{2c_1}{\de }p(x)t+2\varphi _x * \eta _{\delta }(0)$$
Let $$\psi _{\delta }(x,t)=  \frac{2c_1}{\de }p(x)t+2\varphi _x * \eta _{\delta }(0)$$
and $$\psi(x,t)=\inf_{0<\de<1}\psi _{\delta }(x,t).$$
First we prove that for every fixed $x\in \Omega$, $\psi (x,t)$ is concave on $[0,2]$.
For $t ,s \in [0,2]$ and $ \al \in [0,1]$, we have
\begin{align*}
\psi (x ,\al t+(1-\al )s)=&\inf _{\delta } \psi _{\delta} (x,\al t+(1-\al )s)\\
=&\inf _{\delta}       (\al \psi _{\delta }(x,t)+(1-\al )\psi _{\delta }(x,s))
\end{align*}
and 
\begin{align*}
\inf _{\delta}       (\al \psi _{\delta }(x,t)+(1-\al )\psi _{\delta }(x,s))\geq &\inf _{\delta } (\al \psi_\de (x,t))    +     \inf _{\delta }   ((1-\al )\psi_\de (x,s))
\end{align*}
Hence $$\psi (x ,\al t+(1-\al )s) \geq \al \psi (x,t)+(1-\al )\psi (x,s) $$ and so $\psi (x,t)$ is continuous on $ (0,2 )$ in $t$.
Secondly,
$$
\psi (x,0)=\inf _{\delta}2\varphi _x * \eta _{\delta }(0)=2\varphi (x,0)=0$$
and for every $\de $,
\begin{align*}
\limsup _{t\to 0}\psi (x,t)\leq &\limsup _{t\to 0}(2c_1c(x)\frac{1}{\de }t+2\varphi _x * \eta _{\delta }(0))\\
\leq & 2\varphi _x * \eta _{\delta }(0)\leq 2\varphi _x(\de ).
\end{align*}
Hence $\lim\limits _{t\to 0^+}\psi (x,t)=0$ and so $\psi (x,t)$ is continuous on $\big [0,2\big )$.
Moreover, $\psi (x, \cdot )$ is nondecreasing and
\begin{align*}
\psi (x,t)&\leq \psi _1(x,t)\leq 2c_1p(x)t+2\varphi (x ,1)\\
&\leq 2c_1p(x)t+4p(x)\\
&\leq 4c_1p(x)(t+1).
\end{align*}
Finally, we define
$$ \varphi _1 (x,t)=2p(x)t+\psi (x,t)\quad \mbox{if}\quad 0\leq  t\leq 1, $$

$$ \varphi _1(x,t)=2p(x)t+\psi (x,1)\quad \mbox{if}\quad t> 1$$
and we set
$$ {\varphi _1 (x,t)=0 \quad \mbox{if}\quad t\leq  0.}$$
	
\end{proof}

\section{Large solutions to $L u-\varphi (\cdot ,u)=0$ under $(SH_1)$, $(H_2)$,  $ (H_3)$}\label{largesol1}

In this section we prove Theorem \ref{existence}. The argument is based on a very convenient characterization of existence of bounded solutions to \eqref{problem}. It is formulated in terms of thinness at infinity. 

Let $\Omega \subset \mathbb{R} ^d, d\geq 3$ be a domain. A subset $A\subset \Omega$
is called \emph{thin} at infinity if there is a continuous nonnegative $L$-superharmonic function
$s$ on $\Omega$ such that
$$ s\geq 1 \hbox{ on } A  $$and there is $ x_0\in
\Omega $ such that
$$ s(x_0)<1.$$

We say that $\Omega $ is {\it Greenian}
if there is a function $G_\Omega$ called {\it the Green function for }$L$ satisfying \begin{equation}\label{regularityGOmega}
G_{\Omega }(x,y)\in C^{\infty}\big ( \Omega \times \Omega \setminus \{ (x,x): x\in
\Omega \}\big )
\end{equation}  for every $y\in \Omega$
\begin{equation}\label{delta}
L G_{\Omega }(\cdot , y)=-\delta _y,\quad \mbox{in the sense of distributions}
\end{equation}
and
\begin{equation}\label{potential}
G_{\Omega }(\cdot , y),\quad \mbox{is a potential}
\end{equation}
i.e. every nonnegative $L $-harmonic function $h$ such that $h(x)\leq G_{\Omega }(x,y)$ is identically zero. For a given domain $\Omega $, the Green function $G_{\Omega }$ may or may not exist, but existence of $s$ as above implies that it does. %A sufficient condition for existence of $G_{\Omega }$ is: there is $s\in C^2 (\Omega ), s>0, Ls <0$.
%We write
%\begin{equation}\label{greenpotential}
%G_{\Omega }(\varphi (\cdot , c))(x)=\int _{\Omega }G_{\Omega }(x,y)\varphi (y , c)\ dy
%\end{equation}
\begin{theorem}(See Theorem 19 in \cite{Ghardallou2})\label{sufficient-and-necessary-condition-existence-solution}
	Suppose that $\Omega $ is Greenian and $\varphi:\Omega\times [0,\infty )\to [0,\infty )$ is measurable function satisfying
	$(H_1), (H_2),(H_3)$. Equation \eqref{problem} has a nonnegative nontrivial bounded solution in $\Omega$ if and
	only if there exists a Borel set $A\subset \Omega$
	which is thin at infinity and $c_0>0$ such that
	\begin{equation}\label{thinn}
	\int_{\Omega\backslash A} G_{\Omega}(\cdot,y)\varphi(y,c_0)\,dy\not\equiv \infty .
	\end{equation}
\end{theorem}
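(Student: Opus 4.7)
Both directions will rely on the identity \eqref{identity} applied along a regular exhaustion $(D_n)$ of $\Omega$.

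\emph{Necessity.} Assume $u$ is a bounded nonnegative nontrivial solution with $M:=\|u\|_\infty>0$; fix $c_0\in(0,M)$ and set $A:=\{x\in\Omega:u(x)<c_0\}$. The function $s:=(M-u)/(M-c_0)$ is continuous and nonnegative; since $Lu=\varphi(\cdot,u)\ge 0$ makes $u$ an $L$-subharmonic function, $s$ is $L$-superharmonic. Moreover $s\ge 1$ on $A$ and $s(x_0)<1$ at any $x_0$ with $u(x_0)>c_0$ (which exists by nontriviality), so $A$ is thin at infinity. Applying \eqref{identity} with $f=u$ gives $H_{D_n}(u)=u+G_{D_n}\varphi(\cdot,u)$ on $D_n$. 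Because $u$ is $L$-subharmonic, $(H_{D_n}(u))$ is an increasing sequence of $L$-harmonic functions bounded above by $M$; it converges to an $L$-harmonic $h\le M$, and monotone convergence yields $G_\Omega\varphi(\cdot,u)=h-u\le M$ everywhere. Since $u\ge c_0$ on $\Omega\setminus A$, the monotonicity $(H_2)$ gives $\varphi(\cdot,u)\ge\varphi(\cdot,c_0)$ there, so
$$\int_{\Omega\setminus A}G_\Omega(x,y)\,\varphi(y,c_0)\,dy\le G_\Omega\varphi(\cdot,u)(x)\le M\quad\text{for every }x\in\Omega,$$
which is \eqref{thinn}.

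\emph{Sufficiency.} Given $A$ thin at infinity and $c_0>0$ satisfying \eqref{thinn}, apply Proposition \ref{convergenceU_Dn} to the constant $c_0$ (an $L$-harmonic function, since $L1=0$): the sequence $u_n:=U_{D_n}^{\varphi}(c_0)$ decreases to a continuous nonnegative solution $u\le c_0$ of \eqref{problem}. Using $H_{D_n}(c_0)=c_0$, the identity \eqref{identity} reads $u_n+G_{D_n}\varphi(\cdot,u_n)=c_0$ on $D_n$, so setting $v_n:=c_0-u_n$ (extended by $0$ outside $D_n$) produces continuous nonnegative Green potentials on $\Omega$ with $v_n\uparrow v:=c_0-u\le c_0$. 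The point is to show $v\not\equiv c_0$, i.e., $u$ is nontrivial.

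Suppose for contradiction $v\equiv c_0$ and pick $x_0$ with $p_0(x_0)<\infty$, where $p_0(x):=\int_{\Omega\setminus A}G_\Omega(x,y)\,\varphi(y,c_0)\,dy$. Splitting
$$v_n(x_0)=\int_{D_n\setminus A}G_{D_n}(x_0,y)\,\varphi(y,u_n(y))\,dy+\int_{D_n\cap A}G_{D_n}(x_0,y)\,\varphi(y,u_n(y))\,dy=:I_n+J_n,$$
we have $I_n\to 0$ by dominated convergence: the integrand is bounded by the $dy$-integrable envelope $G_\Omega(x_0,y)\,\varphi(y,c_0)\,\mathbb{1}_{\Omega\setminus A}(y)$, and as $u_n\downarrow 0$ pointwise it tends pointwise to $0$. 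Therefore $J_n\to c_0$: the Green potentials of the measures $\varphi(\cdot,u_n)\mathbb{1}_{A\cap D_n}\,dy$ (supported on the thin set $A$) accumulate at the positive constant $c_0$ at $x_0$, even though the densities tend pointwise to zero.

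\emph{Main obstacle.} The final step—extracting a contradiction from the accumulation $J_n\to c_0$ using the thinness of $A$—is the crux of the argument. The plan is to invoke the thinness witness $s$ (WLOG $0\le s\le 1$ with $s\ge 1$ on $A$ and $s(x_1)<1$) and dominate $v$ in a potential-theoretic sense by $c_0 s+p_0$: since each $v_n$ is a Green potential and the ``bad'' piece of its generating measure sits on $A$, a balayage/comparison argument using Lemma \ref{comparaison-semi-elliptic} should give $v\le c_0 s+p_0$ on $\Omega$. Evaluating at a suitable point (one uses that $p_0$, being a Green potential, is small in an appropriate sense, and that the thinness witness can be rescaled) yields $v(x_1)<c_0$, contradicting $v\equiv c_0$. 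This potential-theoretic domination step—relating the ``balayage onto $A$'' of the constant $c_0$ to the thinness witness—is the delicate technical core of the proof and draws on the framework developed in \cite{Ghardallou1} and \cite{Ghardallou2}.
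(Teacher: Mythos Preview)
The paper does not prove this theorem; it is quoted verbatim as Theorem 19 of \cite{Ghardallou2} and used as a black box in the subsequent arguments. So there is no ``paper's own proof'' to compare against, and your proposal stands or falls on its own merits.

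Your necessity direction is clean and correct: the choice $A=\{u<c_0\}$ with witness $s=(M-u)/(M-c_0)$ works exactly as you say, and the passage from the local identity \eqref{identity} to the global bound $G_\Omega\varphi(\cdot,u)\le M$ via the increasing harmonic sequence $H_{D_n}u$ is legitimate.

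The sufficiency direction, however, has a genuine gap that you explicitly acknowledge. Your dominated-convergence argument for $I_n\to 0$ is fine, and the conclusion $J_n\to c_0$ is correctly drawn. But the ``main obstacle'' paragraph is only a plan, not a proof. The sketched inequality $v\le c_0 s+p_0$ is the right target, yet you have not shown how to get it: the $J_n$ piece is a Green potential of a measure supported in $A\cap D_n$, and you need to dominate it by $c_0 s$ (or by the r\'eduite $R^A_{c_0}$), which requires a genuine balayage/comparison argument on the complement of $A$ that you do not supply. Moreover, even granting $v\le c_0 s+p_0$, you must still argue that this sum is strictly below $c_0$ \emph{somewhere}; since $p_0$ is only known to be finite at one point and need not be small there, you would need the additional fact that a Green potential has greatest harmonic minorant zero (hence $\inf p_0=0$), combined with a suitable rescaling of $s$. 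None of this is carried out. As it stands, the sufficiency half is an outline that correctly locates the difficulty and points toward the right machinery in \cite{Ghardallou1,Ghardallou2}, but it is not a proof.
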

In the case of $L=\Delta $ and $\varphi (x,t)=p(x)t^{\gamma }$, $0< \gamma <1$, $p\in \mathcal{L}_{loc}^\infty$, Theorem \ref{sufficient-and-necessary-condition-existence-solution} was proved in \cite{ELMabrouk2004}. Notice that no concavity $(H_4)$ is required.

Moreover, in view of Theorems \ref{sufficient-and-necessary-condition-existence-solution} and \ref{condominn}, Theorem \ref{existence} is straightforward.

\begin{proof}[Proof of Theorem \ref{existence}]
	If $Lu- p(x) \psi(u)=0$ has a nonnegative nontrivial bounded solution then by Theorem \ref{sufficient-and-necessary-condition-existence-solution} there is a thin set $A\subset \Omega$ at infinty such that
	\begin{equation}
	\int _{\Omega \setminus A}G_{\Omega }(\cdot , y)p(y)\ dy \not\equiv \infty.
	\end{equation}
	Let $\varphi _1$ be the function constructed in Theorem \ref{condominn}. Then $\varphi _1$ can be taken such that
	$$
	\varphi _1(x,t)\leq Cp(x)(t+1)$$
	and so again by Theorem \ref{sufficient-and-necessary-condition-existence-solution}, $Lu-\varphi _1(\cdot, u)=0$ has a nonneagtive nontrivial bounded solution. Hence the conclusion follows by Theorem \ref{non-existence-result-second-version}.
\end{proof}

Now we are going to apply Theorem \ref{non-existence-result-second-version} to $\varphi $ that satisfies ($SH_1$).
\begin{theorem}\label{characterizationc}
	Let $\Omega$ be a Greenian domain. Assume that $\varphi $ satisfies $(SH_1)$, $(H_2)$, $(H_3)$ and there exists a thin set $A\subset \Omega $ at infinity such that the function $p(x)$ in $(SH_1)$ satisfies
	\begin{equation}\label{integrab}
	\int _{\Omega \setminus A}G_{\Omega }(\cdot , y)p(y)\ dy \not\equiv \infty .
	\end{equation}
	Then \eqref{problem} has a nonnegative nontrivial bounded solution and it has no large solution.
\end{theorem}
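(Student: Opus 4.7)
The plan is to reduce Theorem \ref{characterizationc} to a direct application of Theorem \ref{non-existence-result-second-version}, bridged by Theorems \ref{sufficient-and-necessary-condition-existence-solution} and \ref{condominn}. Conceptually, the hypothesis $(SH_1)$ supplies a linear majorant $p(x)(t+1)$ for $\varphi$, and \eqref{integrab} is tailored precisely to the integrability appearing in the characterization of bounded solutions, so the same thin set $A$ can be reused for $\varphi$ and for any concave dominator of $\varphi$ controlled by $p$.

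First I would verify that \eqref{problem} admits a nontrivial nonnegative bounded solution. By $(SH_1)$, for every $c_0>0$,
\begin{equation*}
\varphi(y,c_0)\leq p(y)(c_0+1),
\end{equation*}
hence
\begin{equation*}
\int_{\Omega\setminus A}G_{\Omega}(\cdot,y)\varphi(y,c_0)\,dy\leq (c_0+1)\int_{\Omega\setminus A}G_{\Omega}(\cdot,y)p(y)\,dy\not\equiv\infty .
\end{equation*}
Theorem \ref{sufficient-and-necessary-condition-existence-solution} then yields the desired bounded solution, which already proves the first half of the statement.

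Next, to rule out large solutions, I would invoke Theorem \ref{condominn} to obtain $\varphi_1$ satisfying $(SH_1)$, $(H_2)$, $(H_3)$, $(H_4)$ with $\varphi\leq\varphi_1$ and $\varphi_1(x,t)\leq Cp(x)(t+1)$. Repeating the above estimate with the constant $C(c_0+1)$ in place of $(c_0+1)$ gives
\begin{equation*}
\int_{\Omega\setminus A}G_{\Omega}(\cdot,y)\varphi_1(y,c_0)\,dy\not\equiv\infty,
\end{equation*}
so by Theorem \ref{sufficient-and-necessary-condition-existence-solution} applied now to $\varphi_1$, the equation $Lu-\varphi_1(\cdot,u)=0$ likewise has a nontrivial nonnegative bounded solution. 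Since $\varphi_1$ is concave in the second variable, Theorem \ref{non-existence-result-second-version} applies with the pair $(\varphi,\varphi_1)$ and rules out the existence of a large solution to \eqref{problem}.

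There is essentially no serious obstacle: the hard potential-theoretic work is already encoded in Theorems \ref{sufficient-and-necessary-condition-existence-solution} and \ref{non-existence-result-second-version}, while the passage from $\varphi$ to a concave majorant is handled by Theorem \ref{condominn}. The only detail worth highlighting is that a single thin set $A$ serves both invocations of Theorem \ref{sufficient-and-necessary-condition-existence-solution}, which is possible precisely because the majorant $\varphi_1$ is controlled by the same weight $p$ up to an affine factor, so that condition \eqref{integrab} transfers from $\varphi$ to $\varphi_1$ without modification.
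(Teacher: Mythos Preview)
Your proposal is correct and follows essentially the same approach as the paper's own proof: apply Theorem \ref{sufficient-and-necessary-condition-existence-solution} to obtain a bounded solution for $\varphi$, use Theorem \ref{condominn} to produce the concave majorant $\varphi_1$ controlled by $Cp(x)(t+1)$, apply Theorem \ref{sufficient-and-necessary-condition-existence-solution} again to $\varphi_1$, and conclude via Theorem \ref{non-existence-result-second-version}. The only difference is that you spell out the verification of \eqref{thinn} via the $(SH_1)$ bound, which the paper leaves implicit.
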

\begin{proof}
	By Theorem \ref{sufficient-and-necessary-condition-existence-solution}, there is a nonnegative nontrivial bounded solution to \eqref{problem}. Let $\varphi _1 (x,t)$ be the function constructed in Theorem \ref{condominn}. Then $$ \varphi _1 (x,t)\leq \ Cp(x)(t+1).$$ Hence there is a nonnegative nontrivial bounded solution to $Lu-\varphi _1(\cdot , u)=0$ and so by Theorem \ref{non-existence-result-second-version}, there is no large solution to \eqref{problem}.
\end{proof}

Suppose now that for every $t_0>0$ there is a constant $C_{t_0}>0$ such that for every $t\geq 0$ and $x\in \Omega$, $\varphi (x,t)\leq C_{t_0}\varphi (x,t_0)(t+1).$ We do not assume any integrability of $\varphi (x,t_0)$ in the spirit of \eqref{integrab}. Then

\begin{theorem}
Let $\Omega$ be a Greenian domain.	Assume that $\varphi $ satisfies $(H_1)$, $(H_2)$ and
	$(H_3)$. Suppose further that for every $t_0>0$ there is $C_{t_0}>0$ such that $$\varphi (x,t)\leq
	C_{t_0}\varphi (x, t_0)(t+1).$$ If \eqref{problem} has a nonnegative nontrivial bounded solution, then \eqref{problem} has no large solution.
\end{theorem}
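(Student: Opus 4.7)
The plan is to reduce the statement directly to Theorem \ref{characterizationc} by producing, from the hypothesis of a bounded solution, a concrete $p$ that puts $\varphi$ in the $(SH_1)$ framework together with the right integrability against $G_\Omega$.

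First, I would apply the "only if" direction of Theorem \ref{sufficient-and-necessary-condition-existence-solution} to the assumed nontrivial bounded solution: it yields a Borel set $A\subset \Omega$ thin at infinity and a constant $c_0>0$ such that
\[
\int_{\Omega\setminus A} G_\Omega(\cdot,y)\,\varphi(y,c_0)\,dy \not\equiv \infty.
\]
This picks out a preferred level $t_0=c_0$ at which $\varphi(\cdot,c_0)$ is integrable (off a thin set) against the Green function.

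Second, using the structural hypothesis at this $t_0=c_0$, I would set
\[
p(x) := C_{c_0}\,\varphi(x,c_0),
\]
so that $\varphi(x,t)\leq p(x)(t+1)$ for all $x\in \Omega$ and $t\geq 0$. Hypothesis $(H_1)$ applied at $t=c_0$ gives $p\in \mathcal{K}_d^{loc}(\Omega)$, so $\varphi$ satisfies $(SH_1)$ with this particular $p$. Moreover, the integrability condition of Theorem \ref{characterizationc} is inherited by a scalar multiple:
\[
\int_{\Omega\setminus A} G_\Omega(\cdot,y)\,p(y)\,dy \;=\; C_{c_0}\int_{\Omega\setminus A} G_\Omega(\cdot,y)\,\varphi(y,c_0)\,dy \;\not\equiv\; \infty.
\]

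Finally, with $(SH_1)$, $(H_2)$, $(H_3)$ and \eqref{integrab} all verified for $\varphi$ and this choice of $p$, Theorem \ref{characterizationc} delivers the conclusion: no large solution to \eqref{problem} exists. Since the argument is simply an assembly of statements already proved, I do not expect any genuine obstacle; the one conceptual point worth stressing is that the pointwise bound $\varphi(x,t)\leq C_{t_0}\varphi(x,t_0)(t+1)$ is a priori weaker than $(SH_1)$ (no single dominating function is specified in advance), but the existence of a bounded solution itself selects, through Theorem \ref{sufficient-and-necessary-condition-existence-solution}, the level $c_0$ at which $\varphi(\cdot,c_0)$ serves as a legitimate $p$ for the $(SH_1)$ machinery.
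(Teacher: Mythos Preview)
Your proposal is correct and follows essentially the same route as the paper: both extract from Theorem \ref{sufficient-and-necessary-condition-existence-solution} a thin set $A$ and a level $c_0$ with $\int_{\Omega\setminus A}G_\Omega(\cdot,y)\varphi(y,c_0)\,dy\not\equiv\infty$, then set $p(x)=C_{c_0}\varphi(x,c_0)$ to place $\varphi$ in the $(SH_1)$ framework. The only cosmetic difference is that you invoke Theorem \ref{characterizationc} as a black box, whereas the paper inlines its proof by calling Theorem \ref{condominn} and Theorem \ref{non-existence-result-second-version} directly; since Theorem \ref{characterizationc} is itself proved exactly that way, the two arguments are the same.
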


\begin{proof}
	By Theorem \ref{sufficient-and-necessary-condition-existence-solution}, there exists a thin set $A\subset \Omega$ at infinity and $t_0>0$ such that
	\begin{equation}
	\int _{\Omega \setminus A}G_{\Omega }(\cdot , y)\varphi (y,t_0)\ dy \not \equiv\infty .
	\end{equation}
	Let $\varphi _1 (x,t)$ be the function constructed in Theorem \ref{condominn}. We can take $\varphi _1 $ such that $ \varphi _1 (x,t)\leq CC_{t_0}\varphi (x, t_0)(t+1)$. Then
	\begin{equation}
	\int _{\Omega \setminus A}G_{\Omega }(\cdot , y)\varphi _1(y,t_0)\ dy \not \equiv \infty .
	\end{equation}
	Hence there is a nonnegative nontrivial bounded solution to $Lu-\varphi _1(\cdot , u)=0$ and so by Theorem \ref{non-existence-result-second-version}, there is no large solution to \eqref{problem}.
\end{proof}

\section{Bounded solutions to $Lu-\varphi (\cdot,u)=0$}
Theorems \ref{condominn} and \ref{sufficient-and-necessary-condition-existence-solution} allow us to remove concavity from the following characterization of bounded solutions.
\begin{prop}\label{equivalent-proposition}
	Let $\Omega $ be a Greenian domain.
	Suppose that $\varphi (x,t)=p(x)\psi (t)$ satisfies
	$(SH_1)$, $(H_2)$ and $(H_3)$. Let $(D_n)$ be an increasing sequence of regular bounded domains exhausting $\Omega$. The
	following statements are equivalent:
	\begin{enumerate}
		\item  The equation \eqref{problem} has a nonnegative nontrivial bounded solution.
		\item  For every $c>0$, $\ds v_c=\inf _{n\in \NN }U_{D_n}^{\varphi}c$ is a nonnegative nontrivial bounded
		solution of \eqref{problem}
		\item  There exists $c>0$ such that $ \ds v_c=\inf _{n\in \NN }U_{D_n}^{\varphi}c$ is a nonnegative nontrivial
		bounded solution of \eqref{problem}.
	\end{enumerate}
	Furthermore if any of the above holds then
	\begin{equation}\label{max}
	\sup _{x\in \Omega }v_c(x)=c.
	\end{equation}
	
\end{prop}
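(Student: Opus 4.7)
The implications $(2) \Rightarrow (3) \Rightarrow (1)$ are immediate: by Proposition \ref{convergenceU_Dn}, $v_c$ is always a bounded solution of \eqref{problem} with $v_c \leq c$, so nontriviality of $v_c$ for some $c$ already produces a bounded nontrivial solution. For $(1) \Rightarrow (2)$, the plan is to fix $c > 0$ and show both that $v_c$ is nontrivial and, more strongly, that $\sup_\Omega v_c = c$, by passing to the limit in the identity \eqref{identity} on each $D_n$.

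Since $L 1 = 0$, one has $H_{D_n} c = c$, and the identity reads
\[
c = U_{D_n}^{\varphi}c + G_{D_n}\varphi(\cdot, U_{D_n}^{\varphi} c).
\]
Because $U_{D_n}^\varphi c \downarrow v_c$ by Lemma \ref{properties-of-UD}, the Green terms $G_{D_n}\varphi(\cdot,U_{D_n}^\varphi c)$ increase monotonically to $c - v_c$. The crucial step is to identify this limit with $G_\Omega\varphi(\cdot, v_c)$. Here I would invoke $(1)$ together with Theorem \ref{sufficient-and-necessary-condition-existence-solution} to produce a set $A$ thin at infinity and $c_0 > 0$ with $\int_{\Omega\setminus A} G_\Omega(\cdot,y)\varphi(y,c_0)\, dy \not\equiv \infty$; the product form $\varphi(y,t) = p(y)\psi(t)$ then propagates this integrability to every $c > 0$. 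At any point $x_0$ where $G_\Omega(x_0,\cdot)\varphi(\cdot,c)$ is integrable, dominated convergence yields
\[
v_c(x_0) = c - G_\Omega\varphi(\cdot, v_c)(x_0).
\]

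Nontriviality then drops out: $v_c \equiv 0$ would force $\varphi(\cdot, v_c) = \varphi(\cdot,0) = 0$ by $(H_3)$, hence $c = 0$, a contradiction. For $\sup v_c = c$, the identity above, valid quasi-everywhere, forces the Riesz decomposition of the bounded $L$-superharmonic function $c - v_c$ to contain no $L$-harmonic component; therefore $c - v_c$ is a bounded $L$-potential, and since any positive constant is a nonzero $L$-harmonic minorant, a bounded $L$-potential must have infimum zero, giving $\sup_\Omega v_c = c$. The main obstacle is the dominated-convergence step itself, which requires both controlling the contribution of the thin set $A$ (so that the dominating Green potential is genuinely finite at enough points) and reconciling the mismatch between $G_{D_n} \uparrow G_\Omega$ and $\varphi(\cdot, U_{D_n}^\varphi c) \downarrow \varphi(\cdot, v_c)$; once this identification is secured, the structural consequence that $c - v_c$ is a pure potential, and hence the two conclusions, follow essentially automatically.
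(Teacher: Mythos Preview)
Your approach is genuinely different from the paper's, and the obstacle you flag is real and unresolved. The paper does \emph{not} attempt to identify $\lim_n G_{D_n}\varphi(\cdot,U_{D_n}^\varphi c)$ with $G_\Omega\varphi(\cdot,v_c)$. Instead it bootstraps from the concave case: by Theorem~\ref{condominn} there is a concave $\varphi_1\geq\varphi$ with $\varphi_1(x,t)\leq Cp(x)(t+1)$; Theorem~\ref{sufficient-and-necessary-condition-existence-solution} then transfers the bounded-solution hypothesis to $Lu=\varphi_1(\cdot,u)$; Proposition~\ref{equivalentcon} (the concave version, proved via the convexity inequality \eqref{con1} and a finite comparison $c-u_n\leq r-v_n$ on each $D_n$, with no passage to $G_\Omega$) gives that $v_c^1=\lim_n U_{D_n}^{\varphi_1}c$ is nontrivial with $\sup v_c^1=c$; finally $\varphi\leq\varphi_1$ forces $U_{D_n}^\varphi c\geq U_{D_n}^{\varphi_1}c$, hence $c\geq v_c\geq v_c^1$, which immediately yields both nontriviality and $\sup v_c=c$.

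Your direct route would need $\int_\Omega G_\Omega(x_0,y)p(y)\,dy<\infty$ at some $x_0$ to dominate, but Theorem~\ref{sufficient-and-necessary-condition-existence-solution} only delivers finiteness of $\int_{\Omega\setminus A}$; thinness of $A$ gives no control on $\int_A G_\Omega(x_0,y)p(y)\,dy$, and there is no reason this should be finite (the Kato condition on $p$ is purely local). Without the full majorant, your Riesz-decomposition argument for $\sup v_c=c$ does not get off the ground: if $v_c\equiv 0$ the decomposition of $c-v_c=c$ is simply $0+c$, consistent and uninformative, so you genuinely need the identity $c-v_c=G_\Omega\varphi(\cdot,v_c)$ rather than just the decomposition. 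The paper's detour through $\varphi_1$ is precisely what circumvents this integrability issue, at the cost of invoking Theorem~\ref{condominn}.
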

{ The proof of proposition \ref{equivalent-proposition} is contained at the end of this section. We proceed as before: first we obtain the result for a concave nonlinear term i.e. under $(H_1)-(H_4)$ and then we apply Theorem \ref{condominn}.

\begin{prop}\label{equivalentcon}
	Suppose that $\varphi$ satisfies $(H_1)-(H_4)$. Then the statement of 
	Proposition \ref{equivalent-proposition} hold true.
\end{prop}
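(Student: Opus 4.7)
The plan is to reduce everything to the non-trivial implication $(1)\Rightarrow(2)$ together with the identity $\sup_\Omega v_c=c$. The remaining implications $(2)\Rightarrow(3)\Rightarrow(1)$ are immediate: by Proposition \ref{convergenceU_Dn} each $v_c$ is a solution (the constant $c$ being $L$-superharmonic), (3) is a specialisation of (2), and any nontrivial bounded $v_c$ is a witness for (1).

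For $(1)\Rightarrow(2)$, let $u$ be a nontrivial nonnegative bounded solution, set $c_1:=\|u\|_{L^\infty(\Omega)}>0$ and fix $c>0$. Define $\lambda:=\min(c/c_1,1)\in(0,1]$. Since $(H_2)$ and $(H_3)$ force $\varphi(x,0)=0$, concavity $(H_4)$ yields $\varphi(x,\lambda t)\ge\lambda\varphi(x,t)$, hence
$$L(\lambda u)-\varphi(\cdot,\lambda u)=\lambda\varphi(\cdot,u)-\varphi(\cdot,\lambda u)\le 0,$$
so $\lambda u$ is a supersolution with $\lambda u\le c$ on every $\partial D_n$. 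Applying Lemma \ref{comparaison-semi-elliptic} to $\lambda u$ and the solution $U_{D_n}^{\varphi}c$ on $D_n$ produces $U_{D_n}^{\varphi}c\ge\lambda u$; passing to the limit yields $v_c\ge\lambda u$, which is nontrivial, while $v_c\le c$ follows from Lemma \ref{properties-of-UD} applied to the supersolution $c$. This gives (2).

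For the identity $\sup_\Omega v_c=c$, observe that constants are $L$-harmonic, so $H_{D_n}c=c$ and the Green identity \eqref{identity} reduces to
$$c-U_{D_n}^{\varphi}c=G_{D_n}\bigl(\varphi(\cdot,U_{D_n}^{\varphi}c)\bigr)\quad\text{on }D_n.$$
The left side is nondecreasing in $n$ with limit $c-v_c$; a sandwich
$$G_{D_n}\varphi(\cdot,v_c)\le G_{D_n}\varphi(\cdot,U_{D_n}^{\varphi}c)\le G_\Omega\varphi(\cdot,U_{D_m}^{\varphi}c)\quad\text{for }m\le n,$$
together with monotone convergence $G_{D_n}\nearrow G_\Omega$ and the integrability of the outer bound (supplied by Theorem \ref{sufficient-and-necessary-condition-existence-solution}), identifies the limit as $G_\Omega\bigl(\varphi(\cdot,v_c)\bigr)$. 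Consequently $c-v_c$ is a nonnegative Green potential, so its greatest $L$-harmonic minorant is $0$; on the other hand the constant $\inf_\Omega(c-v_c)\ge 0$ is itself $L$-harmonic and minorises $c-v_c$. Therefore $\inf_\Omega(c-v_c)=0$, i.e.\ $\sup_\Omega v_c=c$.

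The main obstacle I anticipate is this final limit identification, since the factors $G_{D_n}$ and $\varphi(\cdot,U_{D_n}^{\varphi}c)$ move in opposite monotonicity directions; the sandwich above, combined with the integrability consequence of Theorem \ref{sufficient-and-necessary-condition-existence-solution}, is the natural remedy and allows one to conclude that $c-v_c$ is a genuine Green potential rather than merely a nonnegative $L$-superharmonic function. The concavity manipulation in $(1)\Rightarrow(2)$ is by contrast routine once one exploits $\varphi(\cdot,0)=0$.
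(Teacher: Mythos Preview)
Your implication $(1)\Rightarrow(2)$ contains a genuine gap in the case $c<c_1=\|u\|_{L^\infty}$, i.e.\ when $\lambda=c/c_1<1$. You correctly compute that $\lambda u$ is a \emph{supersolution}: $L(\lambda u)-\varphi(\cdot,\lambda u)\le 0$. But Lemma \ref{comparaison-semi-elliptic} says that if $Lw-\varphi(\cdot,w)\le Lv-\varphi(\cdot,v)$ and $w\ge v$ on the boundary, then $w\ge v$ inside. To conclude $U_{D_n}^\varphi c\ge \lambda u$ you would need $0=L(U_{D_n}^\varphi c)-\varphi(\cdot,U_{D_n}^\varphi c)\le L(\lambda u)-\varphi(\cdot,\lambda u)$, which is the \emph{opposite} inequality to the one you have. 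Equivalently, a supersolution that is \emph{below} the boundary data tells you nothing; you would need $\lambda u$ to be a subsolution, and concavity gives exactly the wrong sign for $\lambda<1$. (For $c\ge c_1$ your argument is fine since $\lambda=1$ and $u$ is a genuine solution; the problem is only the small-$c$ range.)

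The paper handles small $c$ by a different mechanism that avoids comparing $\lambda u$ with $v_c$ directly. First it proves $\sup_\Omega v_r=r$ for any $r\ge c_1$ via the concavity estimate \eqref{con1}: if $\sup v_r=c_0\le r$ then $U_{D_n}^\varphi(\tfrac{r}{c_0}v_r)\le U_{D_n}^\varphi r$ together with $\tfrac{r}{c_0}U_{D_n}^\varphi v_r\le U_{D_n}^\varphi(\tfrac{r}{c_0}v_r)$ forces $c_0=r$. Then for $0<c<c_1\le r$ it uses the Green identity \eqref{identity} on $D_n$, namely $c-U_{D_n}^\varphi c=G_{D_n}\varphi(\cdot,U_{D_n}^\varphi c)\le G_{D_n}\varphi(\cdot,U_{D_n}^\varphi r)=r-U_{D_n}^\varphi r$, and lets $n\to\infty$ to get $c-v_c\le r-v_r$. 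If $v_c\equiv 0$ this yields $\sup_\Omega v_r\le r-c<r$, contradicting the first step. Note that this also makes the identity $\sup_\Omega v_c=c$ an immediate by-product of the concavity trick, so your separate potential-theoretic argument for it (with the delicate double-limit in $G_{D_n}$ and the appeal to Theorem \ref{sufficient-and-necessary-condition-existence-solution}) becomes unnecessary.
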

Proposition \ref{equivalentcon} was proved in \cite{ELMabrouk2004} for $L=\Delta $ and $\varphi (x,t)=p(x)t^{\gamma }$ where $0<\gamma <1$ and $p\in \mathcal{L}^{\infty}_{loc}$. Generalization to elliptic operators and $\varphi $ satisfying $(H_1)-(H_4)$ is straight forward and $\varphi $ does not need to be of the product form. }
%\footnote{ It is clear that $(H_1)$, $(H_3)$ and $(H_4)$ imply $(SH_1)$.}. 

\begin{proof}[Proof of Proposition \ref{equivalentcon}]
{	The proof is the same as in \cite{ELMabrouk2004} (Lemmas 3 and 4), but we include the argument here for the reader's convenience.
Let $u_n=U_{D_n}^{\varphi} c$ and $u_c=\inf _{n\in \N}u_n$. 
Under hypotheses $(H_1)-(H_4)$, if $u_c$ then
$\displaystyle\sup_{x\in \Omega}u(x)$ is either zero
or equal to $c$. Indeed, by Proposition \ref{convergenceU_Dn} $u_c$ is a nonnegative solution of \eqref{problem} bounded above by $c$. Suppose now that there exists $0 <c_0\leq c$ such that $\ds \sup_{x\in \Omega} u_c=c_0$.
		By Lemma \ref{comparaison-semi-elliptic} $$ U_{D_n}^\varphi (\dfrac{c}{c_0}u_c) \leq U_{D_n}^\varphi c= u_n.$$
		Also by Lemma \ref{properties-of-UD} $$ \dfrac{c}{c_0} U_{D_n}^\varphi u_c \leq  U_{D_n}^\varphi (\dfrac{c}{c_0} u_c).$$
		Hence $$U_{D_n}^\varphi u_c=u_c \leq  \dfrac{c_0}{c} u_n$$
and letting $n$ tend to infinity, we obtain $$u_c \leq  \dfrac{c_0}{c} u_c,$$
		which implies $c= c_0$.

% the proof of Proposition \ref{equivalent-proposition} is based on Proposition %\ref{equivalentcon}, we first provide the proof of Proposition %\ref{equivalentcon}.

%\begin{proof} [Proof of Proposition \ref{equivalentcon}]
	 Therefore with $(H_4)$, if any of conditions (1), (2), (3) is satisfied then \eqref{max} follows.
}
	It is clear that $(2) \Longrightarrow (3) \Longrightarrow (1).$ So it is enough to prove that (1)
	implies (2). Let $w$ be a nonnegative nontrivial bounded solution of \eqref{problem}.
	
	 Suppose first that $\ds r\geq \sup_{\Omega} w$. Then $\ds v=\lim _{n\to \infty}U_{D_n}^{\varphi}r$ is a nonnegative nontrivial bounded solution satisfying $w\leq v\leq r$ in $ \Omega$. Hence 
	\begin{equation}\label{sup}
	\sup _{x\in \Omega }v(x)=r.
	\end{equation}
	Secondly, we take $\ds 0 <c<\sup_{\Omega}w$. 

	By Lemma
	\ref{properties-of-UD}, $u_n=U_{D_n}^{\varphi}c\leq U_{D_n}^{\varphi}r=v_n$, in $D_n$. Then, we
	have
	$$G_{D_n}(\varphi(\cdot ,u_n))\leq G_{D_n}(\varphi(\cdot ,v_n)), \hbox{ in $D_n$}.$$
	Furthermore by \eqref{identity}
	$$ v_n+G_{D_n}(\varphi(\cdot,v_n))=r\ \hbox{      in $D_n$},$$
	and $$u_n+G_{D_n}(\varphi(\cdot,u_n))=c\ \hbox{      in  $D_n$}.$$ We can deduce $$0\leq c-u_n\leq
	r-v_n\ \hbox{        in $D_n$.}$$ When $n$ tends to infinity, we get $$c-u\leq r-v, \hbox{ in
		$\Omega$.}$$ Suppose now that $u$ is trivial. Then $$v\leq r-c\ \hbox{     in $\Omega$.}$$ But
	$\ds \sup_\Omega v=r$, which gives a contradiction.
\end{proof}

\begin{proof}[Proof of Proposition \ref{equivalent-proposition}]
	As before, it is enough to prove that (1) implies (2). By Theorem \ref{sufficient-and-necessary-condition-existence-solution}, there is a thin set $A\subset \Omega$ et infinity such that
	\begin{equation}
	\int _{\Omega \setminus A}G_{\Omega }(\cdot , y)p(y) \ dy \not \equiv \infty.
	\end{equation}
	Let $\varphi _1 (x,t)$ be the function constructed in Theorem \ref{condominn}. We can take $\varphi _1 $ such that $ \varphi _1 (x,t)\leq C p(x)(t+1)$ so again by Theorem \ref{sufficient-and-necessary-condition-existence-solution} $Lu-\varphi _1(\cdot , u)=0$ has a nonnegative nontrivial bounded solution. Let $c>0$. By Proposition \ref{equivalentcon} 
	$$
	v_c^1=\lim _{n\to \infty }U_{D_n}^{\varphi _1}c$$
	is a nonnegative nontrivial bounded solution of $Lu-\varphi _1(\cdot , u)=0$ and \begin{equation}
		\sup _{x\in \Omega }v_c^1(x)=c.
	\end{equation}
	But in view of Lemma \ref{comparaison-semi-elliptic}
	$$
c\geq 	v_c=\lim _{n\to \infty }U_{D_n}^{\varphi }c\geq \lim _{n\to \infty }U_{D_n}^{\varphi _1}c=v_c^1.$$
Then $v_c$	is a nonnegative nontrivial solution to \eqref{problem} satisfying
	$$
	\sup _{x\in \Omega }v_c(x)=c.$$
\end{proof}

\end{document}